\newtheorem{thm}{Theorem}[section]
\newtheorem*{conj}{Conjecture}
\newtheorem{lem}[thm]{Lemma}
\newtheorem{prop}[thm]{Proposition}
\crefname{lem}{Lemma}{Lemmas}
\crefname{thm}{Theorem}{Theorems}
\crefname{prop}{Proposition}{Propositions}
\crefname{cor}{Corollary}{Corollaries}
\crefname{fig}{Figure}{Figures}
\begin{document}
\title{Higher connectivity of fiber graphs{\newline}of Gr\"{o}bner bases}
\author{Samu Potka}
\date{}
\maketitle

\begin{abstract}
Fiber graphs of Gr\"{o}bner bases from contingency tables are important in statistical hypothesis testing, where one studies random walks on these graphs using the Metropolis-Hastings algorithm. The connectivity of the graphs has implications on how fast the algorithm converges. In this paper, we study a class of fiber graphs with elementary combinatorial techniques and provide results that support a recent conjecture of Engstr\"{o}m: the connectivity is given by the minimum vertex degree.
\end{abstract}

\numberwithin{figure}{section}

\section{Introduction}

We will study a class of graphs coming from Gr\"{o}bner bases related to the two-way $n{\times}n$ contingency tables with equal row and column sums. By summing the entries of the tables both row-wise and column-wise, it is easy to see that the $n{\times}n$ tables are the only ones that can satisfy this property. Let $G(n,r)$ be a graph whose vertices are the $n{\times}n$-matrices of non-negative integers with all row and column sums $r$. Two vertices are adjacent if one can move between the corresponding matrices by adding one to two entries and subtracting one from two others. As an example, consider the graph $G(3,2)$, drawn in \cref{fig:example}. The vertices are the $3{\times}3$-matrices of non-negative integers with row and column sums two. The graph $G(n,r)$ is the underlying undirected graph of a \emph{fiber graph} of a reduced Gr\"{o}bner basis and the edges correspond to \emph{Markov moves}. After stating our main result, we shortly review the basics of algebraic statistics.

\begin{figure}[htbp!]
\centering
\includegraphics[scale=0.57]{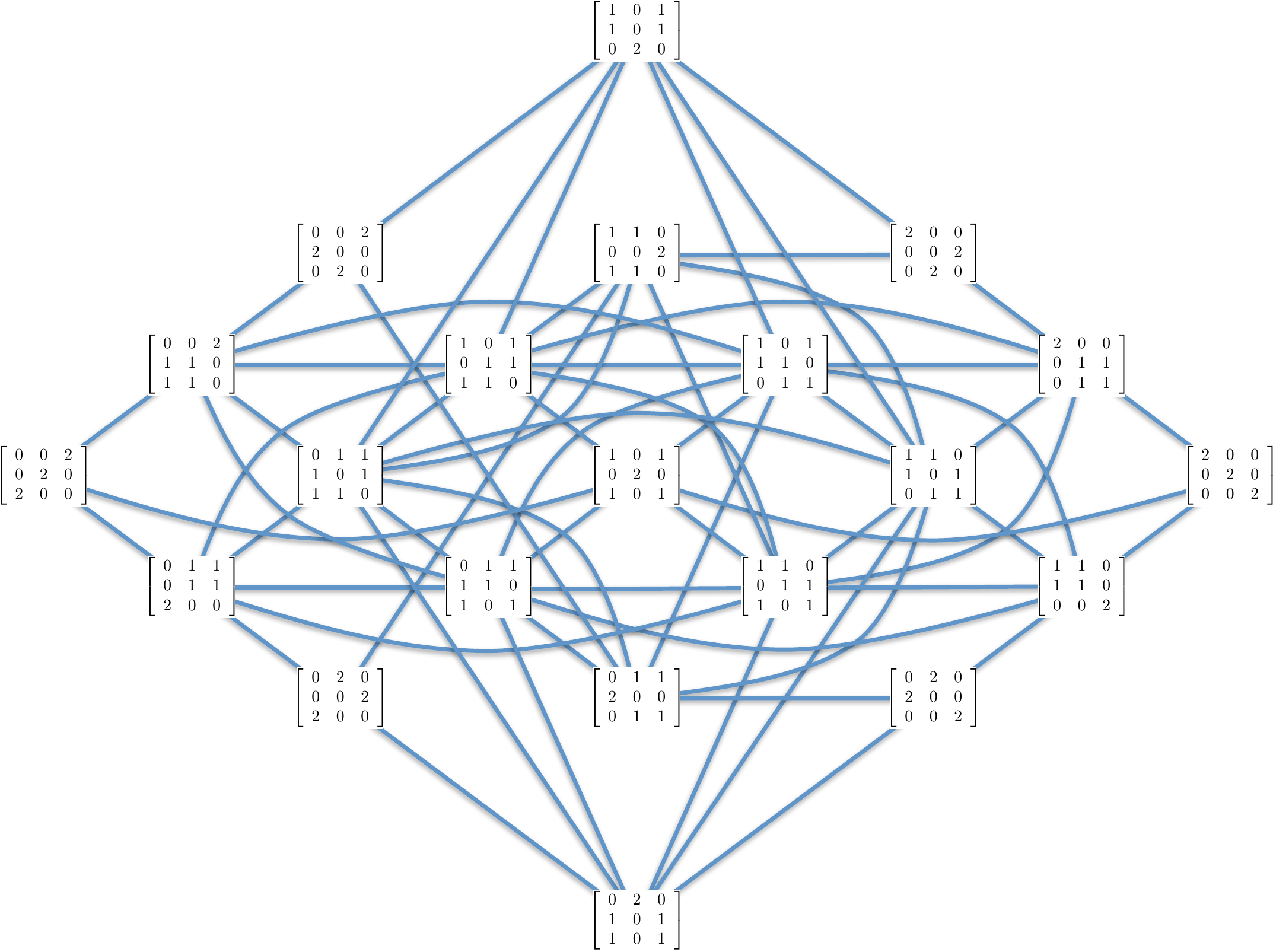}
\caption{The graph $G(3,2)$.}
\label[fig]{fig:example}
\end{figure}

To state our main result, we need to mention some standard definitions from graph theory. The \emph{degree} $d(v)$ of a vertex $v$ in $G$ is the number of edges at $v$. The \emph{minimum degree} $\delta(G)$ of a graph $G$ is the smallest of the degrees in the graph. A graph $G$ is $k$\emph{-connected}, $k \in \mathbb{N}$, if $|G| > k$ and $G - X$ is connected for every set $X \subseteq V(G)$ with $|X| < k$. The \emph{connectivity} ${\kappa}(G)$ of a graph $G$ is the largest $k$ such that $G$ is $k$-connected. 

The \emph{Metropolis-Hastings algorithm} can be used for statistical tests for contingency tables. The algorithm performs a random walk on the fiber graph containing the contingency table we want to study \cite{diaconis-sturmfels}. The connectivity of the fiber graphs affects the convergence of the algorithm: typically, the lower the connectivity, the slower the convergence \cite{hlw}. Our main result is: 
\newtheorem*{thm:maintheorem}{Theorem \ref{thm:maintheorem}}
\begin{thm:maintheorem}
The connectivity $\kappa(G(n,r)) = {n \choose 2}$ for $r > 2$.
\end{thm:maintheorem}
We also prove several other statements regarding $G(n,r)$. The proof of the main result is based on Liu's criterion \cite{liu}, proved, for example, in \cite{bjorner-vorwerk}:
\newtheorem*{lem:liuc}{Lemma \ref{lem:liuc}}
\begin{lem:liuc}[Liu's criterion]
Let $G$ be a connected graph and
$|V(G)| > k$. If for any two vertices $u$ and $v$ of $G$ with distance
$d_G(u,v) = 2$ there are $k$ disjoint $u-v$ paths in $G$, then $G$ is $k$-connected.
\end{lem:liuc}
\newtheoremstyle{component}{}{}{}{}{\itshape}{.}{.5em}{\thmnote{#3}#1}
\theoremstyle{component}
\newtheorem*{component}{}
For the first time, the following conjecture is confirmed for a large class of fiber graphs of an important and common class of Gr\"{o}bner bases.
\begin{conj}[Engstr\"{o}m '12, \cite{e}\cite{p}]
The connectivity of a large fiber graph of a
reduced Gr\"obner basis of a lattice ideal is given by the minimum vertex degree
of the fiber graph.
\end{conj}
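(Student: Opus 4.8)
The statement is a broad conjecture about \emph{every} reduced Gr\"obner basis of \emph{every} lattice ideal, so any honest plan must be a research program rather than a routine argument; the main body of this paper should be read as carrying it out in one rich but tractable family. The starting point is the trivial bound $\kappa(G)\le\delta(G)$, valid for any graph, which leaves only the inequality $\kappa(G)\ge\delta(G)$ to be established. For this I would invoke Liu's criterion \cite{liu}: it suffices to exhibit, for every pair of vertices $u,v$ at distance $d_G(u,v)=2$, a family of $\delta(G)$ internally disjoint $u$--$v$ paths. Since $\kappa\le\delta$ is automatic, matching this count against the minimum degree is exactly what forces $\kappa=\delta$.

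The algebraic leverage comes from the defining property of a reduced Gr\"obner basis. Write each basis element as a binomial $x^{a_i}-x^{b_i}$ with leading term $x^{a_i}$; a Markov move at a fiber element $u$ replaces $u$ by $u-a_i+b_i$ whenever $u\ge a_i$ componentwise, and symmetrically in the reverse direction. Buchberger's $S$-pair criterion states that every overlap of two leading terms reduces to zero, and in the fiber graph this reduction is precisely a short cycle: whenever two moves are simultaneously applicable at a common vertex, the $S$-pair relation reconnects their two one-step images by a detour of bounded length. The plan is to use this uniform supply of local relations as the engine for building disjoint paths. At a distance-$2$ pair $u,v$, each move applicable at $u$ is ``transported'' past the two moves that realize a length-$2$ connection to $v$, with the $S$-pair cycles supplying the corrections needed to steer each transported move toward $v$ along a route that avoids the others. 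The degree hypothesis enters through the word \emph{large}: a fiber deep in the interior of its polytope is \emph{saturated}, meaning each vertex lies above enough coordinates that all of its incident moves are reversibly applicable, so that the full complement of $\delta(G)$ moves, together with the relations among them, is available at every vertex entering the construction.

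The hard part will be \textbf{uniformity and disjointness}. Unlike the transportation family $G(n,r)$ treated here, where the Gr\"obner basis binomials and their overlap pattern are completely explicit and the paths can be written down and checked directly, a general lattice ideal admits a reduced Gr\"obner basis of essentially arbitrary combinatorial shape: the number of binomials, the sizes of their supports, and the way their leading terms overlap all vary without bound, and there is no canonical model to organize them. Two transported paths can collide when the corresponding moves share support, and a local $S$-pair correction applied to repair one collision may create another; controlling this interference globally, rather than in a single well-understood example, is exactly where a uniform proof currently breaks down. A realistic route to the full conjecture would therefore first seek a structural description of the $S$-pair overlap geometry for reduced Gr\"obner bases of lattice ideals --- a local model near minimum-degree and distance-$2$ vertices that is stable as the fiber grows --- and only then feed that model into the Liu-criterion path count. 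The present paper contributes the first confirmation in a large family precisely because in that family this local model is available explicitly; extracting such a model in general remains open.
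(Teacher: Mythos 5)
The statement you were asked to prove is an open conjecture, and the paper does not prove it either: it appears there as a Conjecture, and the paper's actual result is the special case $\kappa(G(n,r)) = \binom{n}{2}$ for $r>2$, offered as the first confirmation of the conjecture for one large class of Gr\"obner bases. Your proposal is therefore right, and honest, in refusing to claim a proof and in framing the general statement as a research program; that matches the paper's own stance. Moreover, the skeleton you outline for the tractable case --- the trivial bound $\kappa \le \delta$, Liu's criterion reducing everything to exhibiting $\delta(G)$ disjoint paths between distance-$2$ vertices, and a largeness hypothesis on the fiber --- is exactly the strategy the paper executes for $G(n,r)$.

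Two caveats about the speculative part. First, the paper's proof of its theorem is purely combinatorial: it never invokes Buchberger's $S$-pair criterion. The ``local relations'' you want are supplied instead by an explicit case analysis of how two Markov moves $\Delta_1,\Delta_2$ can overlap (the intersections component of the proof), together with a decomposition lemma derived from K\H{o}nig's theorem (writing $u = u_1 + \cdots + u_r$ with each $u_i$ a permutation matrix), which is what lets the paper handle the ``problematic entries'' at which moves fail to be applicable. Second, your claim that largeness makes every vertex \emph{saturated}, so that all $\delta(G)$ incident moves are reversibly applicable at every vertex, is false even in the paper's family: for any $r$, the vertex $rI$ of $G(n,r)$ has support of size $n$ and only $\binom{n}{2}$ applicable moves, and these minimum-degree, far-from-saturated vertices are precisely where the paper's work is hardest. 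What largeness ($r>2$ here, or $N$-largeness in the appendix's technical version) actually buys is the exclusion of degenerate fibers such as Hemmecke's example in the appendix (two $k$-cubes joined by an edge: minimum degree $k$ but not $2$-connected); it does not make the local structure uniform. So your program is a reasonable reading of the problem, but the mechanism you propose for the general case would have to survive exactly the non-saturated vertices that your sketch assumes away.
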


The technical version of the conjecture with the condition of a large fiber graph is spelled out in the appendix.

\subsection{The basics of algebraic statistics}

Let us review the basics of algebraic statistics. See the foundational paper \cite{diaconis-sturmfels} or the textbook \cite{dss} for an introduction to the field. 

Fix an integer matrix $A \in \mathbb{Z}^{d \times n}$ whose column sums are equal. The \emph{probability simplex} is $\Delta_{n-1} = \{p \in [0, 1]^n : \sum_{i=1}^n p_i = 1\}$. Let $\mathcal{M}_A = \{p \in \Delta_{n-1} : \text{log } p \in \text{rowspan}(A) \}$ be the \emph{log-linear model} associated with the matrix $A$. The vector $Au$ is the \emph{minimal sufficient statistic} for $\mathcal{M}_A$ and $\mathcal{F}(u) = \{v \in \mathbb{N}^n : Av = Au \}$ the \emph{fiber} of a contingency table $u$, represented in a vectorized form. Let ker$_{\mathbb{Z}}(A)$ be the integer kernel of the matrix $A$. The finite set $\mathcal{B} \subset $ ker$_{\mathbb{Z}}(A)$ is a \emph{Markov basis} for $\mathcal{M}_A$ if there exists a sequence $u_1, ..., u_L \in \mathcal{B}$ such that $v' = v + \sum_{k = 1}^L u_k$ and $v + \sum_{k = 1}^L u_k \geq 0$ for all $l = 1, ..., L$; all contingency tables $u$ and all pairs $v, v' \in \mathcal{F}(u)$. The elements of the Markov basis are called \emph{Markov moves}.

Another way of describing Markov bases is via finite subsets of lattices. In this case, we are interested in the integer lattice ker$_{\mathbb{Z}}(A)$, where $A$ is the matrix associated with the log-linear model. The \emph{fiber} $\mathcal{F}(u)$ of $u \in \mathbb{N}^n$, for example, a contingency table in vectorized form, is the set $\{v \in \mathbb{N}^n : u - v \in \mathcal{L} \}$, where $\mathcal{L}$ is a lattice. Note that if $\mathcal{L} = $ ker$_{\mathbb{Z}}(A)$, this definition is exactly the same as the definition of the fiber of a contingency table mentioned earlier. Let $\mathcal{B}$ be an arbitrary finite subset of $\mathcal{L}$. The subset determines an undirected graph $\mathcal{F}(u)_{\mathcal{B}}$ whose vertices are the elements of $\mathcal{F}(u)$. Two vertices $v$ and $v'$ are connected by an edge if either $v - v'$ or $v' - v$ is in $\mathcal{B}$. The subset $\mathcal{B}$ is a \emph{Markov basis} of $\mathcal{L}$ if the graphs $\mathcal{F}(u)_{\mathcal{B}}$ are connected for all $u \in \mathbb{N}^n$. Fix a weight vector $w \in \mathbb{R}^n$ such that $b \cdot w < 0$ for all $b \in \mathcal{B}$. The graph $\mathcal{F}(u)_{\mathcal{B}}$ is an acyclic directed graph if the edges are now directed: $v \rightarrow v'$, and present whenever $v' - v$ is in $\mathcal{B}$. We call $\mathcal{B}$ a \emph{Gr\"{o}bner basis} of $\mathcal{L}$ if $\mathcal{F}(u)_{\mathcal{B}}$ has a unique sink for all $u \in \mathbb{N}^n$. Then, $\mathcal{F}(u)_{\mathcal{B}}$ is called a \emph{fiber graph of a Gr\"{o}bner basis}. It is important to note that since our focus is on algebraic statistics and Markov bases, we undirect the edges of the fiber graphs of Gr\"obner bases and discuss ordinary connectivity instead of strong connectivity of directed graphs.

It is fruitful to view the previous notions from the standpoint of commutative algebra as well. A lattice $\mathcal{L} \subset \mathbb{Z}^n$ can be represented by the \emph{lattice ideal} $I_{\mathcal{L}} = \langle p^u - p^v : u, v \in \mathbb{N}^n, u - v \in \mathcal{L} \rangle \subset \mathbb{R}[p_1, ..., p_n]$. $I_{\mathcal{L}}$ is a toric ideal. We can write $b = b^+ - b^-$ with non-negative $b^+$ and $b^-$ for every $b \in \mathcal{L}$. The following result is considered one of the starting points for algebraic statistics: \begin{thm}[The fundamental theorem of Markov bases, \cite{diaconis-sturmfels}]\label[thm]{thm:fundamental}
A subset $\mathcal{B}$ of the lattice $\mathcal{L}$ is a Markov basis if and only if the corresponding set of binomials $\{p^{b^+} - p^{b^-} : b \in \mathcal{B} \}$ generates the lattice ideal $I_{\mathcal{L}}$.
\end{thm}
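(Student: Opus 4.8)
The plan is to reduce the biconditional to a single ``dictionary'' statement relating the algebra to the combinatorics, and then read off both directions from it. Write $J = \langle p^{b^+}-p^{b^-} : b \in \mathcal{B}\rangle$ for the binomial ideal in question. Since each $b \in \mathcal{B} \subseteq \mathcal{L}$, every generator of $J$ is among the generators of $I_{\mathcal{L}}$, so the inclusion $J \subseteq I_{\mathcal{L}}$ is automatic. Because the set of binomials $p^u-p^v$ with $u-v\in\mathcal{L}$ is closed under multiplication by monomials, $I_{\mathcal{L}}$ is spanned as a vector space by these binomials (equivalently, by those with $v\in\mathcal{F}(u)$). Hence the equality $J = I_{\mathcal{L}}$ holds if and only if every such binomial already lies in $J$.

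The heart of the argument is the following claim: for $u\in\mathbb{N}^n$ and $v\in\mathcal{F}(u)$, we have $p^u-p^v\in J$ if and only if $u$ and $v$ lie in the same connected component of the fiber graph $\mathcal{F}(u)_{\mathcal{B}}$. Granting this, the theorem is immediate: $\mathcal{B}$ is a Markov basis exactly when every fiber graph is connected, i.e. exactly when $p^u-p^v\in J$ for all $u$ and all $v\in\mathcal{F}(u)$, which by the previous paragraph is exactly the condition $J=I_{\mathcal{L}}$, i.e. that the binomials generate the lattice ideal.

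For the easy direction of the claim, I would suppose $u = v_0, v_1, \dots, v_k = v$ is a path in $\mathcal{F}(u)_{\mathcal{B}}$, so each step satisfies $v_{i+1}-v_i = \pm b_i$ for some $b_i\in\mathcal{B}$. A short check shows that $m_i := v_i - b_i^- = v_{i+1}-b_i^+$ (or the analogous expression with the roles of $+$ and $-$ swapped) is a nonnegative integer vector, whence $p^{v_i}-p^{v_{i+1}} = \pm\, p^{m_i}\bigl(p^{b_i^+}-p^{b_i^-}\bigr)\in J$; summing these telescoping differences yields $p^u-p^v\in J$.

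The hard direction is the implication from an algebraic identity in $J$ to a combinatorial path, and this is where I expect the main obstacle. Rather than manipulating a cancellation-prone expression $p^u-p^v = \sum_j c_j\, m_j\,(p^{b_j^+}-p^{b_j^-})$ with scalars $c_j$ and monomials $m_j$ directly, I would use an invariant. Let $\sim$ denote the equivalence relation on $\mathbb{N}^n$ of being joined by a sequence of $\mathcal{B}$-moves through nonnegative vectors, let $W$ be the free $\mathbb{R}$-vector space on the set of $\sim$-classes, and define a linear map $\phi\colon \mathbb{R}[p_1,\dots,p_n]\to W$ by $\phi(p^u)=[u]$. For any monomial $p^c$ and any $b\in\mathcal{B}$, the vectors $c+b^+$ and $c+b^-$ differ by $b\in\mathcal{B}$ and so are adjacent in a fiber graph, giving $\phi\bigl(p^c(p^{b^+}-p^{b^-})\bigr)=[c+b^+]-[c+b^-]=0$; since such elements span $J$, we conclude $J\subseteq\ker\phi$. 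Then $p^u-p^v\in J$ forces $[u]=[v]$, i.e. $u\sim v$, which is exactly the required connectivity. The linear-map invariant is what makes this clean, sidestepping the bookkeeping of cancellations in an explicit binomial expansion.
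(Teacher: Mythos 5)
The paper does not actually prove this theorem: it is quoted from \cite{diaconis-sturmfels} as a known result, so there is no in-paper argument to compare against. Judged on its own, your proof is correct and is essentially the standard argument (cf.\ \cite{diaconis-sturmfels} and \cite{dss}): reduce to the dictionary ``$p^u-p^v\in J$ iff $u$ and $v$ lie in the same component of the fiber graph,'' get one direction by telescoping along a path (your verification that $m_i=v_i-b_i^-\ge 0$ is the one point that needs the disjointness of the supports of $b_i^+$ and $b_i^-$, and you handle it), and get the other by exhibiting an invariant that vanishes on $J$ but separates components. Where the classical write-ups prove the hard direction by induction on the number of terms in an explicit representation $p^u-p^v=\sum_j c_j m_j(p^{b_j^+}-p^{b_j^-})$, matching and cancelling monomials, your linear map $\phi$ into the free vector space on equivalence classes packages that bookkeeping into a one-line computation $\phi(p^c(p^{b^+}-p^{b^-}))=[c+b^+]-[c+b^-]=0$; this is cleaner and buys you immunity from cancellation issues, at the mild cost of having to observe that such products span $J$ as a vector space (which you do). The only step stated without justification is that $I_{\mathcal{L}}$ is spanned as a vector space by the binomials $p^u-p^v$ with $u-v\in\mathcal{L}$, but your parenthetical reason (closure under multiplication by monomials, since $(u+c)-(v+c)=u-v$) is exactly the right one. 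No gaps.
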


In the case of two-way contingency tables, the sufficient statistic is the row and columns sums of the tables and the matrix $A$ is chosen correspondingly. Since we consider the case of equal, fixed row and column sums, all tables are in the same fiber. The integer kernel of $A$ has a Markov basis whose cardinality is $2{n \choose 2}^2$, namely $\mathcal{B} = \{\pm(e_{ij} + e_{kl} - e_{il} - e_{kj}) : 1 \leq i < k \leq n, 1 \leq j < l \leq n \}$, where $e_{ij}$ denotes the matrix which has one in the position $(i, j)$ and zeroes elsewhere. This is exemplified in \cite{diaconis-sturmfels}, and for an explicit proof of a more general result which implies it, see \cite{dss}. By \cref{thm:fundamental}, $\{p^{b^+} - p^{b^-} : b \in \mathcal{B} \}$ generates the lattice ideal $I_{\mathcal{L}}$. One can verify that the Markov basis gives a Gr\"{o}bner basis by the cost vector with $(r + c)^2$ for the element on row $r$ and column $c$. Since we need to have $b \cdot w < 0$, the generators need to be of the form $-(e_{ij} + e_{kl} - e_{il} - e_{kj})$. The reason why the corresponding fiber graph has a unique sink is that the moves of this form are not possible from the anti-diagonal contingency table. The fact that this Gr\"{o}bner basis is \emph{reduced} is justified, for example, in Chapter 5 of \cite{gbcp}. As mentioned earlier, for the purposes of this paper, we undirect the edges of the fiber graph of the Gr\"obner basis. This means that the edges in our graph $G(n, r)$ correspond exactly to the elements of the Markov basis $\mathcal{B}$, Markov moves.

\subsection{Basic notation of graph theory}
Next, we define a number of basic notions for graphs following those in \cite{diestel}. Let $G$ be a graph, $V(G)$ be the \emph{vertex set} of $G$ and $|G| = |V(G)|$. The \emph{degree} $d(v)$ of a vertex $v$ in $G$ is the number of edges at $v$. The \emph{minimum degree} $\delta(G)$ of a graph $G$ is the smallest of the degrees in the graph and the \emph{maximum degree} $\Delta(G)$ the largest. We call a graph $G$ $k$\emph{-connected}, $k \in \mathbb{N}$, if $|G| > k$ and $G - X$ is connected for every set $X \subseteq V(G)$ with $|X| < k$. The notation $G - X$ means a graph with the vertex set $V(G) - X$ and edges of $G$ such that their endpoints are in $V(G) - X$. A subgraph of this type is called an \emph{induced subgraph} of $G$. By Menger's Theorem \cite[p.~71]{diestel}, a graph is $k$-connected if and only if it contains $k$ independent (in other words, vertex-disjoint) paths between any two vertices. We will use disjoint as a synonym of independent. The \emph{connectivity} ${\kappa}(G)$ of a graph $G$ is the largest $k$ such that $G$ is $k$-connected, the \emph{distance} $d_G(u,v)$ between two vertices $u$ and $v$ of $G$ is the number of edges in a shortest $u-v$ path in $G$, and the \emph{diameter} diam$(G)$ of $G$ is defined as the largest distance in $G$. The graph $G$ is $r$\emph{-regular} if, all its vertices have the same degree $r$. If $V(G)$ admits a partition into two classes such that the vertices in the same class are not adjacent, $G$ is called a \emph{bipartite} graph. A \emph{matching} $M$ in $G$ is a set of independent edges and it is called \emph{perfect} if every vertex of $G$ is incident to exactly one edge in $M$. A \emph{multigraph} is a pair $(V,E)$ of disjoint sets together with a map $E \mapsto [V]^2$ that assigns two vertices to each edge. Here $E$ denotes the set of edges. A multigraph differs from an ordinary graph by allowing several edges between the same two vertices. As opposed to the definition in \cite{diestel}, our definition does not allow self-loops, edges that start from and end to the same vertex. The entry $a_{ij}$ of the \emph{adjacency matrix} $A$ of a multigraph is the number of edges from the vertex $i$ to the vertex $j$. We define the \emph{biadjacency matrix} of a bipartite multigraph as the submatrix of the adjacency matrix, where the columns correspond to the vertices in a bipartition class of the vertex set and rows to the vertices in the other class.

\section{The fiber graphs}

The first results are on the degree of the vertices of $G(n,r)$. The degree $d(v)$ of $v \in V(G(n,r))$ is exactly the number of Markov moves that can be performed from $v$. From here on, a \emph{move} means a Markov move. Recall that here the set of Markov moves is the Markov basis \[\mathcal{B} = \{\pm(e_{ij} + e_{kl} - e_{il} - e_{kj}) : 1 \leq i < k \leq n, 1 \leq j < l \leq n \}.\] Thus, we want to calculate the number of unordered pairs \[\{v_{i_1j_1}, v_{i_2j_2} : v_{i_1j_1}, v_{i_2j_2} > 0, i_1 \neq i_2, j_1 \neq j_2\}.\] If we have such a pair, the entries $v_{i_2j_1}$ and $v_{i_1j_2}$ cannot be $r$, and the move $e_{i_1j_2} + e_{i_2j_1} - e_{i_1j_1} - e_{i_2j_2}$ must then be possible from $v$. We define the support of a vertex $v \in V(G(n,r))$ as the set \[\text{supp}(v) = \{(i, j) : i, j \in \mathbb{N}, 1 \leq i, j \leq n, v_{ij} \neq 0 \}.\] Note that the cardinality of $\text{supp}(v)$ is the number of positive entries in $v$.

\begin{lem}
\label[lem]{lem:degrees}
Let $v \in V(G(n,r))$,
\begin{enumerate}[{\normalfont (a)}] 
\item if $v$ has $r$ as its only positive entries, then $d(v) = \delta(G(n,r)) = {n \choose 2}$.
\item if $v$ does not have $r$ as its only positive entries, then $d(v) \geq \frac{(n+2)(n-1)}{2} = {n \choose 2} + n - 1$.
\end{enumerate}
\end{lem}
\begin{proof}
\emph{Part a).} Since there are exactly $n$ nonzero entries in $v$, all of them in different rows and columns, there are ${n \choose 2}$ pairs $\{v_{i_1j_1}, v_{i_2j_2} : v_{i_1j_1}, v_{i_2j_2} > 0, i_1 \neq i_2, j_1 \neq j_2\}$. Thus, $d(v) = {n \choose 2}$. To prove that $d(v)$ is in this case the minimum degree, we need to prove part b) of this lemma. 

\emph{Part b).}  Consider starting from a vertex that has $r$ as its only positive entries, and therefore support of size $n$, and using a Markov move to get to $v$. Now, because we must have $r > 1$, the size of the support must grow by at least two in the process. Therefore, the size of $\text{supp}(v)$ is at least $n+2$. The pair $\{v_{i_1j_1}, v_{i_2j_2} : v_{i_1j_1}, v_{i_2j_2} > 0, i_1 \neq i_2, j_1 \neq j_2\}$ can be picked in $\frac{(n+2)(n-1)}{2}$ ways, because the row $i_1$ and column $j_1$ cannot contain any $r$-entries if $v_{i_1j_1}$ is positive, and there are $n-1$ other rows and columns which then need to contain positive entries. Hence, there are at least $\frac{(n+2)(n-1)}{2}$ Markov moves from $v$, and $d(v) \geq \frac{(n+2)(n-1)}{2} = {n \choose 2} + n - 1$.
\end{proof}

Using the definition of connectivity with $X = \{v\}$, $v$ being the vertex with all positive entries equal to $r$, we get the following result as an immediate implication of \cref{lem:degrees}. 

\begin{prop}\label[prop]{prop:connmax}
The connectivity of $G(n,r)$ satisfies $\kappa(G(n,r)) \leq {n \choose 2}$.
\end{prop}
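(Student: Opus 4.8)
The plan is to specialize the familiar bound $\kappa(G)\le\delta(G)$ to our graph, producing a small vertex separator by isolating a vertex of minimum degree. By \cref{lem:degrees}(a) I may fix a vertex $v$ whose only positive entries equal $r$ (for instance $r$ times the identity permutation matrix); it satisfies $d(v)=\delta(G(n,r))=\binom{n}{2}$, so its neighbourhood $N(v)$ has exactly $\binom{n}{2}$ vertices.

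To prove $\kappa(G(n,r))\le\binom{n}{2}$ it is enough to show that $G(n,r)$ is not $\bigl(\binom{n}{2}+1\bigr)$-connected, and I would argue this by a short case split on the number of vertices; set $k=\binom{n}{2}+1$. If $|G(n,r)|\le k$, then the defining requirement $|G|>k$ already fails, so $G(n,r)$ is not $k$-connected. Otherwise $|G(n,r)|>k$, and I take $X=N(v)$, which has $|X|=\binom{n}{2}=k-1<k$. In the induced subgraph $G(n,r)-X$ every edge incident to $v$ has been deleted, so $v$ is isolated; moreover $|G(n,r)-X|=|G(n,r)|-\binom{n}{2}>1$, so some other vertex remains and $G(n,r)-X$ is disconnected. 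Either way $G(n,r)$ fails to be $k$-connected, giving $\kappa(G(n,r))\le\binom{n}{2}$.

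There is essentially no hard step here: the whole statement is the standard inequality $\kappa\le\delta$ read off from the minimum-degree vertex supplied by \cref{lem:degrees}, which is exactly what is meant by calling it an immediate consequence. The only point that requires any care is the clause $|G|>k$ in the definition of $k$-connectivity, and the case split above dispatches it without needing an explicit count of $|V(G(n,r))|$; if one preferred, a crude lower bound on the number of non-negative integer matrices with all line sums $r$ would show directly that $|G(n,r)|>\binom{n}{2}+1$ and eliminate the first case.
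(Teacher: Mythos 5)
Your proof is correct and takes essentially the same route as the paper: the standard bound $\kappa(G)\le\delta(G)$, realized by deleting the neighbourhood of the minimum-degree vertex supplied by \cref{lem:degrees}(a). In fact you are slightly more careful than the paper's one-line remark (which writes $X=\{v\}$ where the separating set is really $N(v)$, and which does not address the $|G|>k$ clause), so nothing further is needed.
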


\begin{prop}
If $V(G(n,r))$ contains a vertex $v$ that has one as its only positive entries, then $\Delta(G(n,r)) = d(v) = \frac{nr(nr-2r+1)}{2}$.
\end{prop}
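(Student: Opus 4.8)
The plan is to first compute $d(v)$ directly from the combinatorial description of moves recorded above, and then to show that no vertex can have strictly larger degree by rewriting the degree as a counting quantity and bounding it with the Cauchy--Schwarz inequality.

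First I would record the general degree formula. As explained just before \cref{lem:degrees}, for any vertex $w$ the degree $d(w)$ equals the number of unordered pairs of positive entries lying in distinct rows and distinct columns. Writing $m = |\mathrm{supp}(w)|$, letting $R_i$ be the number of positive entries in row $i$ and $C_j$ the number in column $j$, and subtracting from $\binom{m}{2}$ the pairs sharing a row and those sharing a column, one obtains
\begin{equation*}
d(w) = \binom{m}{2} - \sum_{i=1}^n \binom{R_i}{2} - \sum_{j=1}^n \binom{C_j}{2} = \tfrac{1}{2}\Bigl( m^2 + m - \sum_{i} R_i^2 - \sum_{j} C_j^2 \Bigr),
\end{equation*}
using $\sum_i R_i = \sum_j C_j = m$. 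For the vertex $v$ all of whose positive entries equal $1$, each row and each column contains exactly $r$ of them, so $m = nr$ and $R_i = C_j = r$; substituting yields $d(v) = \frac{nr(nr-2r+1)}{2}$, the claimed value.

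It then remains to prove $d(w) \le d(v)$ for every vertex $w$, which gives $\Delta(G(n,r)) = d(v)$. Here the key observations are the two constraints any vertex satisfies: since the entries are positive integers summing to $r$ in each line, every row and column has at most $r$ positive entries, so $m = \sum_i R_i \le nr$; and by Cauchy--Schwarz $\sum_i R_i^2 \ge m^2/n$ and likewise $\sum_j C_j^2 \ge m^2/n$. Plugging these into the formula gives
\begin{equation*}
2\,d(w) \le m^2 + m - \frac{2m^2}{n} = \frac{n-2}{n}\,m^2 + m .
\end{equation*}
For $n \ge 2$ the right-hand side is increasing in $m \ge 0$, hence maximised at $m = nr$, where it equals $nr(nr-2r+1) = 2\,d(v)$. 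Thus $d(w) \le d(v)$ for all $w$, and since $v$ is itself a vertex we conclude $\Delta(G(n,r)) = d(v) = \frac{nr(nr-2r+1)}{2}$.

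The computation of $d(v)$ and the surrounding algebra are routine; the step needing genuine care is the optimisation. The point to get right is that the upper bound over all admissible marginal profiles is attained exactly at the all-$r$ profile forced by $m = nr$ — that is, precisely by a vertex whose positive entries are all $1$ — so the bound is not merely valid but tight and realised by an actual vertex. I would also note that realizability of intermediate degree sequences is irrelevant: we only need an upper bound over all vertices together with its attainment at the explicit vertex $v$, and the inequality chain above supplies both.
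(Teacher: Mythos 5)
Your proof is correct, and its second half takes a genuinely different route from the paper. The computation of $d(v)$ coincides with the paper's: both count unordered pairs of positive entries lying in distinct rows and distinct columns, and your inclusion--exclusion identity $d(w)=\binom{m}{2}-\sum_i\binom{R_i}{2}-\sum_j\binom{C_j}{2}$ is a clean closed form of that count (valid because two distinct positions cannot share both a row and a column, so the two subtracted sets are disjoint). The divergence is in the maximality claim. The paper argues locally: starting from $v$ it performs a Markov move to a vertex $u$ with an entry exceeding one, tracks case by case how the number of admissible pairs changes to conclude $d(u)<d(v)$, and then asserts that ``similar arguments hold'' as the process is iterated --- a step that is left rather informal. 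Your argument is global: the constraints $m\le nr$ and $\sum_i R_i^2,\ \sum_j C_j^2\ \ge m^2/n$ (Cauchy--Schwarz) give $2d(w)\le \frac{n-2}{n}m^2+m$, which is increasing in $m\ge 0$ for $n\ge 2$ and equals $2d(v)=nr(nr-2r+1)$ at $m=nr$. This replaces the paper's delicate bookkeeping and its iteration step with a two-line estimate, and the equality analysis (forcing $m=nr$ and all $R_i,C_j$ equal, i.e.\ an all-ones vertex) identifies the maximizers as a byproduct. The one thing you establish less than the paper attempts is \emph{strict} inequality $d(u)<d(v)$ for every other vertex $u$, but the proposition only asks for the maximum, which your bound together with its attainment at $v$ fully delivers.
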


\begin{proof}
If $r = 1$, we are done by \cref{lem:degrees} and the fact that all the vertices need to be of this type. Thus, assume $r > 1$. There has to be $nr$ one-entries. Say that one of them is in the position $(i_1, j_1)$. The row $i_1$ and column $j_1$ contain $2r-1$ other one-entries. Therefore, there are $\frac{nr(nr-2r+1)}{2}$ pairs $\{v_{i_1j_1}, v_{i_2j_2} : v_{i_1j_1}, v_{i_2j_2} > 0, i_1 \neq i_2, j_1 \neq j_2\}$. All of these pairs correspond to a different Markov move from $v$, and thus $d(v) = \frac{nr(nr-2r+1)}{2}$. Because the row and column sums are $r$ and we have an $n \times n$-matrix, $|\text{supp}(u)| \leq nr$ for any $u \in V(G(n,r))$. The equality corresponds to the case where the positive entries are all ones. Thus, if there is at least one >one-entry in $u$, the number of positive entries is less than $nr$. Then, we can pick $u_{i_1j_1}$ in less than $nr$ ways. We claim that $d(u) < d(v)$. If $n = 2$, we are done by \cref{lem:degrees}. Let $n \geq 3$, and start from $v$. Perform a Markov move $e_{i'_1j'_1} + e_{i'_2j'_2} - e_{i'_1j'_2} - e_{i'_2j'_1}$ from $v$ to $u$ so that an entry $u_{i'_1j'_1} > 1$. If we would pick $u_{i_1j_1} = u_{i'_1j'_1}$, the entry $u_{i_2j_2}$ in the pair $\{u_{i_1j_1}, u_{i_2j_2} : u_{i_1j_1}, u_{i_2j_2} > 0, i_1 \neq i_2, j_1 \neq j_2\}$ could be chosen in at most one more way than $v_{i_2j_2}$ for the corresponding $v_{i_1j_1}$, because by our assumption, only $u_{i'_2j'_2}$ can be both positive and such that its position, $(i'_2, j'_2)$, is not in the support of $v$. For any other $u_{i_1j_1}$, the number of pairs $u_{i_2j_2}$ can only decrease or stay the same, since we can assume that $(i'_1, j'_2)$ and $(i'_2, j'_1)$ are in the support of $v$ but not in the support of $u$. Moreover, since $n \geq 3$, $r > 1$ and we assume that $v_{i'_1j'_2} = v_{i'_2j'_1} = 1$, if $v_{i'_2j'_2} = 0$ there is a positive entry of $u$ such that it is in the row $i'_2$ but not in the column $j'_2$. If the pick $u_{i_1j_1}$ is that entry, by our assumption there is one less possible pair $u_{i_2j_2}$ for $u_{i_1j_1}$ than a pair $v_{i_2j_2}$ for the corresponding $v_{i_1j_1}$, because $u_{i'_1j'_2} = 0$, but $v_{i'_1j'_2} = 1$. On the other hand, if $v_{i'_2j'_2} > 0$, the number of pairs $\{u_{i_1j_1}, u_{i_2j_2} : u_{i_1j_1}, u_{i_2j_2} > 0, i_1 \neq i_2, j_1 \neq j_2\}$ where $u_{i_1j_1} = u_{i'_1j'_1}$ does not change while moving from $v$ to $u$. As we iterate the process from $u$, similar arguments hold. Thus, because we could pick $u_{i_1j_1}$ in less than $nr$ ways, $d(u) < d(v)$ and $\Delta(G(n,r)) = \frac{nr(nr-2r+1)}{2}$. Therefore, $d(v) = \Delta(G(n,r)) = \frac{nr(nr-2r+1)}{2}$. 
\end{proof}

Note that when $n < r$, there is no such $v$ with all positive entries equal to one. Nevertheless, the maximum degree obtained is an upper bound for the vertex degree in that case as well. Thus, we know that ${n \choose 2} \leq d(G) \leq \frac{nr(nr-2r+1)}{2}$. Now, having information on how the degree of the vertices of $G(n,r)$ behaves, we try to find the connectivity $\kappa(G(n,r))$. First, we will introduce a couple of auxiliary results:

\begin{lem}\label[lem]{lem:commonchoices}
The number of same Markov moves $M$ from $u, v \in V(G(n,r))$ with $d_G(u,v) \leq 2$ is at least ${n \choose 2}$ for $r > 2$.
\end{lem}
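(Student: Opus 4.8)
The plan is to count Markov moves that are simultaneously available from both $u$ and $v$, where a move is available from a matrix precisely when it has two positive entries in different rows and different columns (the two entries we decrement). Since a move is determined by an unordered pair of positions $\{(i_1,j_1),(i_2,j_2)\}$ with $i_1\neq i_2$ and $j_1\neq j_2$, a move is common to $u$ and $v$ when both of these positions lie in $\mathrm{supp}(u)\cap\mathrm{supp}(v)$. So the quantity $M$ is at least the number of such ``rectangle corner'' pairs drawn from the common support $S = \mathrm{supp}(u)\cap\mathrm{supp}(v)$, i.e.\ pairs of positions in $S$ sharing neither a row nor a column. The whole lemma thus reduces to showing that $S$ is large enough and spread out enough to guarantee at least $\binom{n}{2}$ such pairs, using the constraint $r>2$.

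First I would reduce to the worst case $d_G(u,v)=2$ (the case $d_G(u,v)\le 1$ only makes $S$ larger, so it is easier). Writing $u$ and $v$ as related by two Markov moves, I would track exactly which positions can be forced out of the common support: passing from $u$ to an intermediate $w$ changes four entries, and from $w$ to $v$ changes four more, so at most eight positions are affected in total. The key structural observation is that each Markov move only \emph{decrements} two entries, and because $r>2$ (so each row/column sum is at least $3$), an entry being decremented need not drop to zero. More carefully, I would argue that the positions where $u$ and $v$ genuinely disagree in a way that removes them from the support are limited, and that enough of the ``heavy'' structure of a matrix with row/column sums $r$ survives in $S$.

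The cleanest route is probably to lower-bound the number of valid corner pairs directly from a lower bound on how the entries of $S$ distribute across rows and columns. Recall from the proof of \cref{lem:degrees} that for a single vertex the count of such pairs is controlled by the support size and its row/column spread; the natural analogue here is to show that the common support $S$ still meets every row and every column, or nearly so, and then to count pairs of positions in $S$ that form the two opposite corners of an axis-aligned rectangle. I would set up the count as a sum over rows, subtracting the ``forbidden'' same-row and same-column coincidences, and then verify the inequality $M\ge\binom{n}{2}$ by comparing against the degree computation in \cref{lem:degrees}, where the baseline value $\binom{n}{2}$ already appears for the sparsest vertices.

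The hard part will be the careful bookkeeping of which positions can be simultaneously lost from both supports when $u$ and $v$ are at distance exactly two: I must ensure that the two Markov moves separating them cannot conspire to destroy so many common corner pairs that the count falls below $\binom{n}{2}$. This is where the hypothesis $r>2$ must be used decisively, since for $r=2$ (the regime of \cref{fig:example}) entries are easily knocked down to zero and the bound can fail; I would isolate the argument that $r>2$ keeps enough entries strictly positive after decrementing, and I expect this to be the most delicate and error-prone step, requiring a case analysis on how the supports of $u$ and $v$ overlap.
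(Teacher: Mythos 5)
Your overall strategy is the same as the paper's: identify a common move with an unordered pair of positions in $S=\mathrm{supp}(u)\cap\mathrm{supp}(v)$ lying in distinct rows and distinct columns, and then argue that $r>2$ keeps $S$ large and well spread. However, the proposal stops exactly where the lemma is actually proved, and the step you defer to ``careful bookkeeping'' is the whole content of the argument. The paper's count is not a size-of-$S$ estimate with correction terms; it is the specific claim that \emph{every column} of the matrix contains a position of $S$, and that for any two distinct columns one can choose such positions in distinct rows (because a column containing a positive entry of $u$ forces every other column to contain a positive entry avoiding that row, as no $r$-entry can share a row with another positive entry). Choosing one of the $n$ columns and then one of the remaining $n-1$ yields exactly ${n \choose 2}$ valid pairs. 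Your hedge that $S$ ``meets every row and every column, or nearly so'' is not enough: if even one column were missed, the column-pair count drops to ${n-1 \choose 2}$, and a pure cardinality bound $|S|\ge n$ also fails on its own, since $n$ positions concentrated in few rows or columns produce far fewer than ${n \choose 2}$ rectangle-corner pairs. The role of $r>2$ is likewise more specific than ``entries need not drop to zero'': a position in $\mathrm{supp}(u)\setminus\mathrm{supp}(v)$ has $u$-entry at most $2<r$, so its row and column must each contain a further positive entry, and it is this that repairs the coverage of every row and column by $S$.

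A smaller inaccuracy: of the at most eight positions touched by the two moves, only the (at most four) decremented ones can leave the support in passing from $u$ to $v$, and symmetrically for $v$ to $u$; your count of affected positions is therefore looser than needed, though not fatal. To complete the proof along your lines you would need to add the column-coverage argument above; once that is in place, the ``sum over rows minus forbidden coincidences'' bookkeeping you describe becomes unnecessary.
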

\begin{proof}
Because $d_G(u,v) \leq 2$ and $r > 2$, $|\text{supp}(u)\cap\text{supp}(v)| \geq n$. We want to know whether at least ${n \choose 2}$ pairs of those positions are usable by a Markov move $M$. Those positive entries in $u$ that are not in the support of $v$ must equal 1 or 2. Then, because $r > 2$, there has to be entries $e_i$ satisfying $1 \leq e_i \leq r-1$, at least one in the same column and one in the same row as such an entry. In general, each of the columns not containing an $e_i$ has to contain a positive entry as well. Having a positive entry in a particular column means that there cannot be an $r$-entry in the same row. Thus, there is a positive entry not in this row in each of the other columns. We can choose a pair $\{(i_1, j_1), (i_2, j_2) : i_1 \neq i_2, j_1 \neq j_2\} \subset \text{supp}(u)\cap\text{supp}(v)$ in total in ${n \choose 2}$ ways by first selecting one of the $n$ columns and then one of the $(n-1)$ other columns.
\end{proof}

\begin{thm}[K\H{o}nig, \cite{konig}]\label[thm]{thm:konig}
Every $r$-regular bipartite multigraph decomposes into $r$ perfect matchings.
\end{thm}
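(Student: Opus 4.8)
The plan is to induct on $r$, reducing the decomposition to the repeated extraction of a single perfect matching. The heart of the argument is the claim that every $r$-regular bipartite multigraph with $r \geq 1$ contains a perfect matching; granting this, one removes such a matching to obtain an $(r-1)$-regular bipartite multigraph, applies the inductive hypothesis to decompose the remainder into $r-1$ perfect matchings, and combines these with the removed matching to get the desired $r$ matchings.

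For the base case $r = 1$, the graph consists precisely of a single perfect matching, so there is nothing to prove. For the inductive step, let $G$ be $r$-regular with bipartition classes $A$ and $B$. First I would observe that $|A| = |B|$: summing the degrees over each class counts the edges of $G$ with multiplicity, so $r|A| = |E(G)| = r|B|$, forcing $|A| = |B|$ since $r \geq 1$.

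To produce the perfect matching I would invoke Hall's marriage theorem, so the key step is verifying Hall's condition $|N(S)| \geq |S|$ for every $S \subseteq A$, where $N(S)$ denotes the set of neighbours of $S$ in $B$. Here I would count edges rather than vertices: the edges incident to $S$ number exactly $r|S|$, and each such edge has its other endpoint in $N(S)$; on the other hand, the edges incident to $N(S)$ number exactly $r|N(S)|$. Since every edge incident to $S$ is in particular incident to $N(S)$, we obtain $r|S| \leq r|N(S)|$ and hence $|S| \leq |N(S)|$. Hall's theorem then yields a matching saturating $A$, which is perfect because $|A| = |B|$.

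The step most deserving of care is the edge-counting in the verification of Hall's condition, since we are working with a multigraph rather than a simple graph: the inequality $|S| \leq |N(S)|$ must be extracted from a comparison of edge counts taken with multiplicity, not from any direct comparison of the vertex sets themselves. Once this bookkeeping is handled correctly, the inductive removal of perfect matchings is routine and completes the decomposition into $r$ perfect matchings.
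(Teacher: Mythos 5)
Your proof is correct. Note that the paper does not prove this statement at all --- it is K\H{o}nig's classical theorem, cited directly from \cite{konig} and used as a black box in the proof of Lemma \ref{lem:decomp} --- so there is no authorial argument to compare against. Your Hall-based induction is the standard modern proof, and you handle the one point that genuinely needs care in the multigraph setting: Hall's condition $|N(S)| \geq |S|$ must be derived by double-counting edges with multiplicity ($r|S| \leq r|N(S)|$) rather than by comparing vertex sets directly, and the matching extracted at each stage leaves an $(r-1)$-regular multigraph so the induction closes.
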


Let $E_n(i,j)$ be the $n{\times}n$-matrix with all entries 0, except for that position $(i,j)$ is 1.

\begin{lem}\label[lem]{lem:decomp}
Let $u$ be a vertex of $G(n,r)$ and $(i_1,j_1),...,(i_k,j_k)$ positions in an $n{\times}n$-matrix such that $u \geq E_n(i_1,j_1)+...+E_n(i_k,j_k)$, and $k \leq r$.
Then there is a decomposition of $u$ into a sum of matrices $u_1+...+u_r$ that are vertices of $G(n,1)$ such that
$u_1+...+u_l \geq E_n(i_1,j_1)+...+E_n(i_l,j_l)$ for all $1 \leq l \leq k$.
\end{lem}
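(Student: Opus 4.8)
The plan is to prove this by induction on $r$, with \cref{thm:konig} doing the structural work. First I would observe that the vertices of $G(n,1)$ are exactly the $n{\times}n$ permutation matrices, and that a vertex $u$ of $G(n,r)$, read as a biadjacency matrix, is precisely the biadjacency matrix of an $r$-regular bipartite multigraph $H$ on row-vertices and column-vertices. By \cref{thm:konig}, $H$ decomposes into $r$ perfect matchings, equivalently $u = u_1 + \cdots + u_r$ with each $u_l$ a vertex of $G(n,1)$. The only genuine content of the lemma is therefore to arrange the prescribed domination of the partial sums while extracting these matchings.

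To keep the bookkeeping uniform I would record the requirement through the cumulative matrices $D_l = E_n(i_1,j_1) + \cdots + E_n(i_{\min(l,k)},j_{\min(l,k)})$ for $0 \le l \le r$, so that $D_0 = 0$, $D_r = E_n(i_1,j_1)+\cdots+E_n(i_k,j_k)$, each increment $D_l - D_{l-1}$ is either $0$ or a single $E_n(i_l,j_l)$, and the goal becomes $u_1 + \cdots + u_l \ge D_l$ for all $l$, with $u \ge D_r$ by hypothesis. For the inductive step I would extract the first matching: if $D_1 = E_n(i_1,j_1)$, then since $u \ge D_r \ge E_n(i_1,j_1)$ the position $(i_1,j_1)$ lies in $\mathrm{supp}(u)$, and because $u$ is $r$-regular, \cref{thm:konig} produces a decomposition of $H$ into $r$ perfect matchings, one of which uses an edge at $(i_1,j_1)$; I take that matching as $u_1$, so $u_1 \le u$ and $u_1 \ge D_1$. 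If instead $D_1 = 0$, any of the $r$ matchings will do.

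Then I would pass to the residual matrix $u' = u - u_1$, which is $(r-1)$-regular, together with the shifted sequence $D'_l = \max(D_{l+1} - u_1,\, 0)$ (entrywise) for $0 \le l \le r-1$. A short check shows that $D'_0 = 0$, that each increment $D'_l - D'_{l-1}$ is again a single unit matrix or zero (at a position where $u_1$ already sits, the increment is absorbed), and that $u' \ge D'_{r-1}$: indeed $u' = u - u_1 \ge D_r - u_1$ and $u' \ge 0$, so $u' \ge \max(D_r - u_1, 0) = D'_{r-1}$. The induction hypothesis then yields $u' = u_2 + \cdots + u_r$ with $u_2 + \cdots + u_{l+1} \ge D'_l$ for every $l$, whence $u_1 + \cdots + u_l \ge u_1 + D'_{l-1} = u_1 + \max(D_l - u_1, 0) \ge D_l$, which is exactly what is required. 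The base case $r = 1$ is immediate, since then $u$ is itself a permutation matrix and $u \ge D_1$ holds by hypothesis.

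The step I expect to be the crux is pinning down the correct form of the residual requirement. The tempting choice $u - u_1 \ge D_r - E_n(i_1,j_1)$ is too strong and can genuinely fail: it may happen that the only permutation matrix through $(i_1,j_1)$ available inside $u$ is forced to also cover a later prescribed position $(i_s,j_s)$, so that $u - u_1$ no longer dominates the demand there. The truncation $\max(\,\cdot\,,0)$ is precisely what resolves this, since a later position already covered by $u_1$ simply becomes a null increment of the demand sequence; this is also the reason for phrasing the invariant through the cumulative matrices $D_l$ rather than through the list $(i_1,j_1),\ldots,(i_k,j_k)$ directly. Everything else reduces to the routine verification that $\{D'_l\}$ is a legitimate demand sequence.
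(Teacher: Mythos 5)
Your proof is correct and follows essentially the same route as the paper: apply K\H{o}nig's theorem to peel off a perfect matching $u_1$ through $(i_1,j_1)$, reduce the demand by whatever $u_1$ already covers, and recurse on the $(r-1)$-regular residue $u-u_1$. The paper records the absorbed demands via a maximal subset $L$ of indices covered by $u_1$ and inducts on $k$, while you use the entrywise truncation $\max(D_l-u_1,0)$ and induct on $r$, but these are the same recursion written in different bookkeeping.
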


\begin{proof}
The proof is by induction on $k$. For $k=0$ we are done. According to \cref{thm:konig}, every $r$-regular bipartite multigraph decomposes into $r$ perfect matchings. Interpreting $u$ as the biadjacency matrix of an $r$-regular bipartite multigraph, we get a decomposition into matrices $u_1+...+u_r$ with row and column sum 1. Assume that we have indexed the matrices such that $(u_1)_{i_1,j_1}>0$. Let $L$ be a maximal subset of $\{1,2,..,k\}$ with 1, such that $u_1 \geq \sum_{l \in L} E_n(i_l,j_l)$. By induction we can find a decomposition of $u-u_1$ admitting the conditions for $\{(i_l,j_l)$ | $l \in \{1,2,..,k\}\setminus L\}$, and then we extend it.
\end{proof}

It might be of interest to the reader that the previous result, \cref{lem:decomp}, implies that the semigroup generated by permutation matrices is a normal cone.

\begin{prop}\label[prop]{prop:conn}
The graph $G(n,r)$ is connected.
\end{prop}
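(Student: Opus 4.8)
The plan is to show that every vertex can be joined by a path of Markov moves to a single fixed canonical vertex; connectivity then follows since all vertices lie in one component. I would take the diagonal matrix $rI$ (with $r$ in each position $(i,i)$ and $0$ elsewhere) as the target, and use the diagonal sum $\Phi(u) = \sum_{i=1}^n u_{ii}$ as a potential. Since each row sums to $r$ we have $u_{ii} \leq r$ and hence $\Phi(u) \leq nr$, with equality exactly when $u = rI$. So it suffices to exhibit, from any vertex $u \neq rI$, a Markov move that strictly increases $\Phi$; the process must then terminate at $rI$ after finitely many steps.

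First I would locate a deficient diagonal entry: if $u \neq rI$, then some $u_{ii} < r$, and because row $i$ and column $i$ each sum to $r$ there exist $j \neq i$ with $u_{ij} > 0$ and $k \neq i$ with $u_{ki} > 0$. The Markov move $e_{ii} + e_{kj} - e_{ij} - e_{ki}$, on the rectangle with rows $\{i,k\}$ and columns $\{i,j\}$, is then admissible, since it only decreases the positive entries $u_{ij}$ and $u_{ki}$. It raises $u_{ii}$ by one while otherwise touching only off-diagonal positions (and possibly a second diagonal entry when $j = k$), so $\Phi$ increases by at least one. Iterating drives $u$ to $rI$, producing a $u$--$rI$ path and hence connectivity.

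The step requiring the most care is verifying that the trace-increasing move is always both available and admissible, that is, that the positive entries $u_{ij}, u_{ki}$ genuinely exist off the diagonal and that the move preserves non-negativity; but this reduces to the elementary row/column-sum bookkeeping above, so there is no serious obstacle here.

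Alternatively, one can give a proof that invokes \cref{lem:decomp}: writing $u = P_1 + \dots + P_r$ as a sum of permutation matrices, a transposition applied to a single layer $P_i$ is realized by a Markov move on $u$ (its two decreasing positions each receive a $1$ from $P_i$, so non-negativity is automatic), and since transpositions generate the symmetric group each layer can be independently sorted to the identity, again reaching $rI$. Finally, since $\mathcal{B}$ is a Markov basis, connectivity of the fiber graph $G(n,r) = \mathcal{F}(u)_{\mathcal{B}}$ is in fact immediate from the definition; the combinatorial arguments above simply make this self-contained.
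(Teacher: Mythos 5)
Your argument is correct, but it is a genuinely different (and much more self-contained) route than the one in the paper. The paper's proof of \cref{prop:conn} is a one-liner: $G(n,r)$ is the underlying undirected graph of a fiber graph of a Gr\"obner basis of $\mathcal{B}$, and connectivity is exactly what the Markov-basis property of $\mathcal{B}$ (established in the introduction via \cite{diaconis-sturmfels} and \cite{dss}) guarantees --- essentially the observation you relegate to your final sentence. Your main argument instead proves connectivity from scratch with a monovariant: every $u \neq rI$ admits an admissible rectangle move $e_{ii}+e_{kj}-e_{ij}-e_{ki}$ that strictly increases the trace $\Phi(u)=\sum_i u_{ii}$ (your bookkeeping checks out: the decreased entries $u_{ij}, u_{ki}$ are positive by the row/column-sum argument, and the only possibly affected diagonal positions are $(i,i)$ and, when $k=j$, $(k,j)$, both of which increase), so every vertex reaches $rI$ in at most $nr-\Phi(u)$ steps. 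What this buys is independence from the imported fact that $\mathcal{B}$ is a Markov basis, plus an explicit canonical vertex and an explicit path-length bound; what it costs is length, and it re-proves something the paper has already set up and cited. Your alternative via \cref{lem:decomp} and layerwise transpositions is also sound (and not circular, since that lemma rests only on K\H{o}nig's theorem). Any of the three arguments would be accepted; the paper simply chooses the shortest.
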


\begin{proof}
The graph $G(n,r)$ is the underlying undirected graph of a fiber graph of a Gr\"{o}bner basis, and therefore connected. 
\end{proof}

\begin{lem}[Liu's criterion, \cite{liu}]\label[lem]{lem:liuc}
 Let $G$ be a connected graph and
$|V(G)| > k$. If for any two vertices $u$ and $v$ of $G$ with distance
$d_G(u,v) = 2$ there are $k$ disjoint $u-v$ paths in $G$, then $G$ is $k$-connected.
\end{lem}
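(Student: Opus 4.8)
The plan is to argue by contradiction directly from the definition of $k$-connectivity. Suppose $G$ satisfies the hypotheses but is not $k$-connected. Since $G$ is connected and $|V(G)| > k$, there must be a separating set with fewer than $k$ vertices; among all such separators I would fix one, call it $S$, of minimum cardinality, so that $|S| \leq k-1$ and $G - S$ has at least two components. Note $S \neq \emptyset$, since $G$ itself is connected.

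The crucial structural step, which I expect to be the main point to get right, is to produce a vertex of $S$ that is adjacent to two different components of $G - S$, as this will yield a pair of vertices at distance exactly $2$. I would establish the slightly stronger fact that, by minimality of $S$, every vertex $s \in S$ has a neighbor in \emph{every} component of $G - S$: if some $s$ had no neighbor in a component $C$, then in $G - (S \setminus \{s\})$ the only vertex of the old separator that remains is $s$, and $s$ is not adjacent to $C$, so $C$ would still be cut off from the other components; thus $S \setminus \{s\}$ would be a strictly smaller separator, contradicting minimality. Fixing any $s \in S$ and two components $C_1, C_2$ of $G - S$, I then choose a neighbor $u \in C_1$ and a neighbor $v \in C_2$ of $s$.

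Since $u$ and $v$ lie in different components of $G - S$, every $u-v$ path in $G$ must meet $S$; in particular $u$ and $v$ are non-adjacent, while $s$ is a common neighbor, so $d_G(u,v) = 2$. Now I invoke the hypothesis: there are $k$ disjoint (internally vertex-disjoint) $u-v$ paths in $G$. Each such path crosses from $C_1$ to $C_2$ and hence contains a vertex of $S$, which—because $u, v \notin S$—is an interior vertex of the path. As the paths are internally disjoint, distinct paths use distinct vertices of $S$, forcing $|S| \geq k$ and contradicting $|S| \leq k-1$. Therefore no such separator exists and $G$ is $k$-connected. The only genuinely delicate part is the minimality argument guaranteeing a separator vertex with neighbors in two components (hence a pair at distance exactly $2$); after that, counting the separator vertices consumed by the $k$ disjoint paths is routine.
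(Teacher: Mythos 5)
Your proof is correct and complete: the minimality argument showing every vertex of a minimum separator $S$ has a neighbor in each component of $G-S$ is sound, the resulting pair $u,v$ indeed has $d_G(u,v)=2$, and the counting of internal vertices of the $k$ independent paths against $|S|\le k-1$ closes the contradiction. The paper itself does not prove this lemma but defers to the cited reference of Bj\"orner and Vorwerk, and your argument is essentially the standard one given there, so there is nothing substantive to contrast.
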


A proof of \cref{lem:liuc} can be found in \cite{bjorner-vorwerk}. With these tools, we can set out to prove our main result:

\begin{thm}\label[thm]{thm:maintheorem}
The connectivity $\kappa(G(n,r)) = {n \choose 2}$ for $r > 2$.
\end{thm}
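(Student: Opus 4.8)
The plan is to derive the theorem from a matching pair of bounds. The inequality $\kappa(G(n,r)) \le \binom{n}{2}$ is already recorded in \cref{prop:connmax}, so it suffices to prove $\kappa(G(n,r)) \ge \binom{n}{2}$. Since $G(n,r)$ is connected by \cref{prop:conn} and $|V(G(n,r))| > \binom{n}{2}$ (already the $n!$ matrices $r\cdot P$ for permutation matrices $P$ outnumber $\binom{n}{2}$ when $n \ge 2$), \cref{lem:liuc} reduces the task to exhibiting, for every pair $u,v$ with $d_G(u,v) = 2$, a family of $\binom{n}{2}$ pairwise internally disjoint $u$--$v$ paths.

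Fix such $u,v$ and write $v = u + b_1 + b_2$ with $b_1,b_2$ Markov moves, so that $w_0 = u + b_1 = v - b_2$ is a common neighbour giving one length-two path $u - w_0 - v$. For the remaining paths I would index by the common moves of \cref{lem:commonchoices}: at least $\binom{n}{2}$ Markov moves $M$ are simultaneously applicable at $u$ and at $v$. For each such $M$ I propose the length-four detour
\[
u \;\to\; u+M \;\to\; u+M+b_1 \;\to\; v+M \;\to\; v ,
\]
whose first and last edges use $M$ (legal at $u$ and at $v$ respectively by the choice of $M$) and whose third edge reaches $u+M+b_1+b_2 = v+M$, legal because $v+M$ is. Note that $u+M$ and $v+M$ again differ by $b_1+b_2$, hence lie at distance at most two. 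Distinct $M$ give distinct $u+M$, distinct $u+M+b_1$ and distinct $v+M$, so two detours can only meet through a cross-coincidence such as $u+M = u+M'+b_1$, governed by a relation $M - M' \in \{b_1, b_2, b_1+b_2\}$; whenever one occurs I would reroute the interior of one path through an alternative common neighbour of $u+M$ and $v+M$ rather than discard it, thereby keeping the total count at $\binom{n}{2}$.

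The step I expect to be the main obstacle is verifying the one remaining edge $u+M \to u+M+b_1$, that is, that the shifted vertex $u+M+b_1$ really lies in $G(n,r)$: although $u+b_1$ is legal, the extra perturbation $+M$ could in principle drive a shared entry above $r$ or below $0$. This is exactly where I would invoke the permutation-matrix decomposition of \cref{lem:decomp}: decomposing $u$ into $r$ vertices of $G(n,1)$ with the entries touched by $b_1$ placed into prescribed layers exposes the slack guaranteed by $r > 2$ and lets one realize $M$ and $b_1$ on disjoint layers, keeping every entry in $[0,r]$. For the stubborn moves $M$ where this shift still fails, I would again use the distance-two relation between $u+M$ and $v+M$ to route their interior through an alternative common neighbour. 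Assembling the length-two path, the family of detours, and these legality and disjointness checks yields $\binom{n}{2}$ independent $u$--$v$ paths, which together with \cref{prop:connmax} closes the theorem.
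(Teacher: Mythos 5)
Your skeleton is exactly the paper's: the upper bound from \cref{prop:connmax}, Liu's criterion via \cref{prop:conn}, and length-four detours $u \to u+M \to u+M+b_1 \to v+M \to v$ indexed by an auxiliary move $M$, supplemented by the one length-two path. However, two places where you defer the work are precisely where the paper's proof lives, and as written your count does not close.

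First, the bookkeeping around the excluded choices of $M$ is not optional. Among the $\binom{n}{2}$ common moves supplied by \cref{lem:commonchoices} may sit $M=b_1$ (whose detour passes through $u+b_1=w_0$, colliding with the length-two path) and $M=-b_2$ (whose detour passes through $v-b_2=w_0$); moreover if both $M=b_2$ and $M=-b_1$ are valid they produce the \emph{same} alternative path through $u+b_2$, so one of them must also be discarded. Naively you are left with $\binom{n}{2}-2$ or $\binom{n}{2}-3$ detours plus one short path, which is short of $\binom{n}{2}$. The paper escapes this by showing, via part (b) of \cref{lem:degrees}, that whenever these bad coincidences are actually \emph{possible} the vertex $u$ cannot have all its positive entries equal to $r$, so $d(u)\ge\binom{n}{2}+n-1$ and the surplus $n-1\ge 2$ absorbs the discards. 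Your proposal never invokes this degree dichotomy, and without it the count fails. Second, your two escape hatches --- ``reroute the interior through an alternative common neighbour'' when two detours intersect, and again when the edge $u+M\to u+M+b_1$ is illegal --- are unsubstantiated: you would need to show such an alternative common neighbour of $u+M$ and $v+M$ exists \emph{and} avoids every other path in the family, which is not easier than the original problem. The paper does not reroute; it classifies the intersection patterns ($M'-M\in\{b_1,b_2,b_1+b_2\}$), bounds how many can occur (at most one or two, depending on the shape of $b_1+b_2$), shows each forces extra positive entries in $u$ and hence a strictly larger degree, and for the small-entry (``problematic'') cases uses the K\H{o}nig decomposition of \cref{lem:decomp} to peel off a layer $u_1$ so that the obstructing entries vanish in $u'$ and the moves of $u'$ are automatically safe. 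Your instinct to use \cref{lem:decomp} for the legality of the middle edge is the right one, but it must be paired with the explicit case analysis and degree accounting rather than a rerouting claim.
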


\begin{proof}

By \cref{prop:connmax}, $\kappa(G(n,r)) \leq {n \choose 2}$. Therefore, our goal is to show that $G(n,r)$ is ${n \choose 2}$-connected. We aim to achieve this by applying \cref{prop:conn} and \cref{lem:liuc} as well as a technique of building a large number of paths. We need to show that using the technique, we will in every case get at least ${n \choose 2}$ independent paths. It turns out that the technique used will not work in the cases $r < 3$. If $n = 2$, ${n \choose 2} = 1$. By \cref{prop:conn}, $G(n,r)$ is connected and the case $n = 2$ is done. Thus, we assume from now on that $n \geq 3$.

We will start by setting up the machinery. By \cref{prop:conn}, we can apply \cref{lem:liuc}. Let $u, v$ $\in V(G(n,r))$ with $d_G(u,v) = 2$. Then there are Markov moves $\Delta_1$ and $\Delta_2$ such that $u + \Delta_1 + \Delta_2 = v$. Because $d_G(u,v) = 2$, $\Delta_1 + \Delta_2$ does not correspond to a single move. Now, let us consider the sequences $M, \Delta_1, \Delta_2, -M$, where $M$ is an additional move, such that $u + M + \Delta_1 + \Delta_2 - M = v$, as depicted in \cref{fig:idea}. Let $c_M$ be the number of ways to select $M$ so that we get disjoint paths. We want to show that $c_M$ is at least ${n \choose 2} - 1$. Then we would have in total ${n \choose 2}$ disjoint paths between $u$ and $v$ when we count the original path of length two as well. Note that the move $M$ has to be a valid Markov move from $u$. By \emph{valid}, we mean that the move does not take entries of $u$ negative (or correspondingly, larger than $r$). In other words, the move $M$ needs to connect $u$ to another vertex in the graph. The term \emph{possible} move is used as a synonym for valid move.

\begin{figure}[htbp!]
\centering
\includegraphics[scale=1.0]{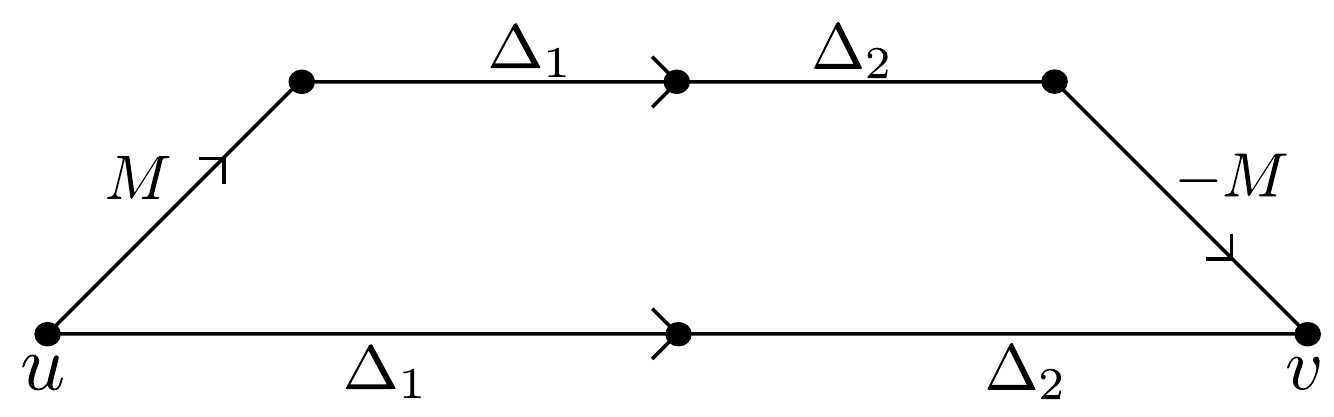}
\caption{The types of paths considered in the proof with the directions corresponding to the signs of the moves.}
\label[fig]{fig:idea}
\end{figure}

There are some remarks to be made:
\begin{itemize}
\item We must have $M \neq \Delta_1$, because $M = \Delta_1$ would lead to an intersection. For the same reason, we need $M \neq -\Delta_2$. 
\item If we can use $M = \Delta_2$ or $M = -\Delta_1$, we have $u + \Delta_2 + \Delta_1 = v$. However, if both of them are valid Markov moves from $u$, we have to subtract one from $c_M$, because then the paths with $M = \Delta_2$ and $M = -\Delta_1$ intersect.
\item On the other hand, if the move $M = \Delta_2$ is not valid, the path using $M = -\Delta_1$ does not connect $u$ and $v$.
\item If $r = 1$, the entries $\Delta_1$ subtracts from are not usable by $M$. By \cref{lem:degrees}, in that case each of the vertices have the degree ${n \choose 2}$, and thus this method does not apply, because we will not get enough ways of choosing $M$. 
\item If $r = 2, M, \Delta_1$ and $\Delta_2$ cannot have even one same entry where they subtract from, again problematic in the cases where we start from a vertex with the degree ${n \choose 2}$. Then we cannot get the desired result using solely this procedure. For simplicity, assume $r \geq 3$.  
\end{itemize}

\begin{component}[The basic case]
Let us first assume that $M$ can subtract from the same entries as $\Delta_1$ and $\Delta_2$. By this, we mean that the entries are large enough that we do not have to worry whether using $M$ before $\Delta_1$ and $\Delta_2$ causes an entry to be negative after performing $M + \Delta_1$ or $M + \Delta_1 + \Delta_2$. Consider $\Delta_1 = \Delta_2$.
\begin{itemize}
\item We have at least ${n \choose 2} - 1$ ways of choosing $M$ such that $M \neq \Delta_1$, because d$(u) \geq {n \choose 2}$ by \cref{lem:degrees}. 
\item If it is even possible to select $M = -\Delta_1$, some nonzero-entries of $u$ are not $r$, and by \cref{lem:degrees}, the degree of the vertex we are at is at least ${n \choose 2} + n - 1$. Therefore, after subtracting the disallowed moves $M = \Delta_1$ and $M = -\Delta_1$, we have $c_M \geq {n \choose 2} + n - 3 \geq {n \choose 2}$ in this case, because $n \geq 3$.
\end{itemize}
However, we also have to take the case $\Delta_1 \neq \Delta_2$ into account.
\begin{itemize} 
\item If $M = \Delta_2$ is possible, but $d(u) = {n \choose 2}$, $M = -\Delta_2$ is not possible. Therefore, the previous results hold in this case as well.
\item If also $M = -\Delta_1$ is possible as well as $M = -\Delta_2$, by the earlier analysis we get $c_M \geq {n \choose 2} - 1$, because with our assumption $n \geq 3$, ${n \choose 2} + n - 4 \geq {n \choose 2} - 1$. 
\item If on the other hand $M = \Delta_2$ is not possible, we want to know whether the possibility $M = -\Delta_2$ is included in $d(u)$. If the number of entries of $\Delta_2$ that prevent its use is at least two, $\Delta_1$ must be $-\Delta_2$, because $\Delta_2$ will then subtract from entries zero in $u$ $\Delta_1$ adds to. However, there is no point in this. Therefore, consider that $\Delta_2$ has only one entry obstructing its use. Then $\Delta_2$ subtracts from an entry zero in $u$, which implies that $\Delta_1$ has to add to that entry, but then $-\Delta_1$ would subtract from the entry. Thus, $M = -\Delta_1$ is not included in $d(u)$. If $M = -\Delta_2$ is to be possible, by \cref{lem:degrees}, we need to be at $u$ with $d(u) \geq {n \choose 2} + n - 1$, because otherwise we would subtract from an $r$-entry with $\Delta_2$, but then we would add to a zero-entry. Then $c_M \geq {n \choose 2} + n - 3 \geq {n \choose 2}$. Otherwise we only need to avoid $M = \Delta_1$ and have $c_M \geq {n \choose 2} - 1$, because $d(u) \geq {n \choose 2}$ by \cref{lem:degrees}.
\end{itemize}
\end{component}

\begin{component}[Problematic entries]
Let us now move on to the cases where $M$ cannot subtract from all the entries where $\Delta_1$ and $\Delta_2$. Then, the moves $\Delta_1$ and $\Delta_2$ subtract from entries smaller than two in $u$. The number of this kind of \emph{problematic entries} can range from one to four. By \cref{lem:commonchoices}, the number of same Markov moves $M$ from $u$ and $v$ must be at least ${n \choose 2}$.
\begin{itemize}
\item First, say that $\Delta_1 + \Delta_2$ subtracts from either four or three one-entries or two or one two-entry. Then the choices of moves at $v$ do not include $\Delta_1$ or $\Delta_2$. We have to avoid $-\Delta_2$, and thus $c_M \geq {n \choose 2} - 1$. 
\item If $\Delta_1$ and $\Delta_2$ subtract from three one-entries in total, but the sum $\Delta_1 + \Delta_2$ does not, there are six different cases: either $\Delta_1$ or $\Delta_2$ subtracts from two one-entries, and $\Delta_1$, $\Delta_2$ or both add to an entry the other subtracts from. If $\Delta_1$ subtracts from two one-entries, the moves $\Delta_1$ and $\Delta_2$ are clearly not possible at $v$. Then we have $c_M \geq {n \choose 2} - 1$. The same thing happens when $\Delta_2$ subtracts from two one-entries and $\Delta_2$ does not add to an entry $\Delta_1$ subtracts from. In the two cases left, we cannot rely on \cref{lem:commonchoices}. 
\item If $\Delta_1$ and $\Delta_2$ subtract from a total number of two one-entries, we either have the other one subtracting from two or both subtracting from one. In the latter case, if neither of them or only $\Delta_1$ adds to an entry the other subtracts from, $\Delta_1$ and $\Delta_2$ are not possible at $v$. Hence, in this case as well, we have $c_M \geq {n \choose 2} - 1$. 
\end{itemize}
The cases left are: only $\Delta_1$ or $\Delta_2$ subtracts from one-entries; $\Delta_1$ subtracts from one one-entry, while $\Delta_2$ subtracts from at least one different one-entry but adds to the one-entry $\Delta_1$ subtracts from. 
\begin{itemize}
\item If $\Delta_1$ subtracts from one one-entry, $u \geq E_n(i,j)$ where $(i,j)$ is the position of that particular one-entry. Following \cref{lem:decomp}, decompose $u$: $u = u_1 + ... + u_r = u_1 + u'$, where $u_1 \geq E_n(i,j)$ and $u' \in V(G(n,r-1))$. The one-entry in the position $(i,j)$ in $u$ is now zero in $u'$. Because $u$ has one one-entry, it must have at least another. The second one-entry can either be in $u_1$ or $u'$. If it is in $u_1$, $d(u') \geq {n \choose 2}$, and if it is in $u'$, $d(u') \geq {n \choose 2} + n - 1$ by \cref{lem:degrees}. In the former case we get $c_M \geq {n \choose 2} + (n-2) - 2 \geq {n \choose 2} - 1$, where ${n \choose 2}$ comes from the moves for $u'$ and $(n-2)$ from the moves using the one entry not problematic in $u$ now in $u_1$. In the latter case we have $c_M \geq = {n \choose 2} + (n-1) - 2 \geq {n \choose 2}$. We subtract two in both cases to avoid counting $M = -\Delta_1$ and $M = -\Delta_2$. 
\item If $\Delta_1$ subtracts from two one-entries at positions $(i_1,j_1)$ and $(i_2, j_2)$, $u \geq E_n(i_1, j_1) + E_n(i_2,j_2)$, and we decompose $u = u_1 + u_2 + u'$, where $u' \in V(G(n, r-2))$ and $u_1 + u_2 \geq E_n(i_1, j_1) + E_n(i_2, j_2)$. Thus, the problematic entries are zero in $u'$, and therefore also the move $\Delta_1$ is not possible from $u'$. If $d(u') = {n \choose 2}$, $-\Delta_1$ is not included in $d(u')$, and we have $c_M = {n \choose 2} - 1$. Otherwise $d(u') > {n \choose 2}$, and we get $c_M \geq {n \choose 2} - 1$.
\item If $\Delta_2$ subtracts from one-entries some of which are also in $u$, the case is treated exactly the same way as the two previous ones. If the particular one-entries are not in $u$, $M$ cannot subtract from them and thus there is nothing to avoid. 
\item The case where $\Delta_1$ subtracts from one one-entry and $\Delta_2$ subtracts from one or two different one-entries, but $\Delta_2$ adds to the one-entry $\Delta_1$ subtracts from and at most one of the one-entries $\Delta_2$ subtracts from is present in $u$ already, is treated exactly same way as the previous ones, because we have to avoid one or two problematic one-entries. If there are two problematic entries both already in $u$, they can be avoided the same way as before. If there are three of them, all present in $u$ at positions $(i_1,j_1)$, $(i_2, j_2)$ and $(i_3, j_3)$, we have $u \geq E_n(i_1, j_1) + E_n(i_2,j_2) + E_n(i_3,j_3)$. Say that the two first are the ones used by $\Delta_2$. They can be put in the same $u_1$ in the proof of \cref{lem:decomp}. Then we have $u = u_1 + u_2 + u'$, where $u' \in V(G(n, r-2))$. The problematic entries are zero in $u'$ and the moves $\Delta_1$ and $\Delta_2$ are not possible from $u'$. Then $d(u') \geq {n \choose 2}$ only includes the disallowed choice $M=-\Delta_2$. Thus $c_M \geq {n \choose 2} - 1$.
\end{itemize}
\end{component}

\begin{component}[Intersections]
The last question is what if different paths $M + \Delta_1 + \Delta_2 - M$ and $M' + \Delta_1 + \Delta_2 - M'$ intersect. By symmetry and straightforward calculations, the number of cases reduces to three: $M' - M = \Delta_1$; $M' - M = \Delta_2$; $M' - M = \Delta_1 + \Delta_2$. The different types are drawn in \cref{fig:intersections}:

\begin{figure}[htbp!]
\centering
\includegraphics[scale=1.0]{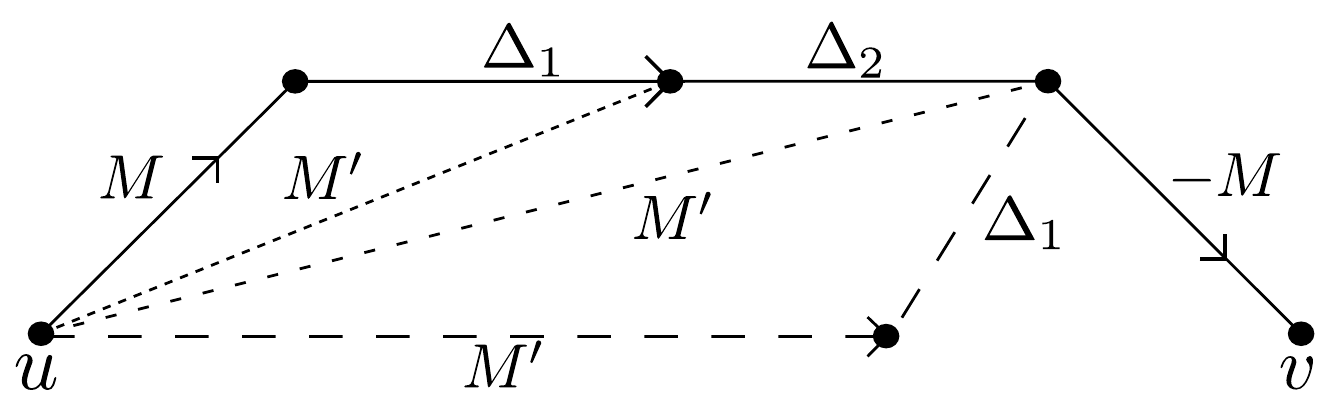}
\caption{The possible types of intersection.}
\label[fig]{fig:intersections}
\end{figure}

The last case is the easiest to handle. Assume that we only have intersections of this type. An intersection can happen in two different ways. 
\begin{itemize}
\item The moves $\Delta_1$ and $\Delta_2$ share one entry the other adds to and the other subtracts from. This sum can be written in three ways, one being the original, because $M$ and $M'$ have to be Markov moves, both have to use three of the operations in $\Delta_1 + \Delta_2$ and the cancelling operations can be done to three different entries. Therefore, this case amounts to two intersections. Let us a write an example to illustrate this: \begin{equation*} \begin{split}
\Delta_1 + \Delta_2 &= \left(
\begin{array}{rrr}
 	1 & -1 & 0 \\
	-1 & 1 & 0 \\
	0 & 0 & 0 \\
\end{array}
\right) + \left(\begin{array}{rrr}
 	0 & 0 & 0 \\
	0 & -1 & 1 \\
	0 & 1 & -1 \\
\end{array}
\right)\\  &= \left(
\begin{array}{rrr}
 	0 & 0 & 0 \\
	-1 & 0 & 1 \\
	1 & 0 & -1 \\
\end{array}
\right) + \left(\begin{array}{rrr}
 	1 & -1 & 0 \\
	0 & 0 & 0 \\
	-1 & 1 & 0 \\
\end{array}
\right)\\ &= \left(
\begin{array}{rrr}
 	1 & 0 & -1 \\
	-1 & 0 & 1 \\
	0 & 0 & 0 \\
\end{array}
\right) + \left(\begin{array}{rrr}
 	0 & -1 & 1 \\
	0 & 0 & 0 \\
	0 & 1 & -1 \\
\end{array}
\right) = \left(
\begin{array}{rrr}
 	1 & -1 & 0 \\
	-1 & 0 & 1 \\
	0 & 1 & -1 \\
\end{array}
\right). 
\end{split}
\end{equation*} 
\item The other possibility, disjoint from the previous one, is that the positions of the non-zero rows or columns of $\Delta_1$ and $\Delta_2$ are the same. Then there are two ways, the original and another with swapped rows, to write the sum $\Delta_1 + \Delta_2$. This gives one intersection. Again, let us do a basic example:
\begin{equation*} \begin{split} 
\Delta_1 + \Delta_2 &= \left(
\begin{array}{rrrr}
 	1 & -1 & 0 & 0 \\
	-1 & 1 & 0 & 0 \\
	0 & 0 & 0 & 0 \\
	0 & 0 & 0 & 0 \\
\end{array}
\right) + \left(\begin{array}{rrrr}
 	0 & 0 & 0 & 0 \\
	0 & 0 & 0 & 0 \\
	-1 & 1 & 0 & 0 \\
	1 & -1 & 0 & 0 \\
\end{array}
\right)\\ &= \left(
\begin{array}{rrrr}
 	0 & 0 & 0 & 0 \\
	-1 & 1 & 0 & 0 \\
	0 & 0 & 0 & 0 \\
	1 & -1 & 0 & 0 \\
\end{array}
\right) + \left(\begin{array}{rrrr}
 	1 & -1 & 0 & 0 \\
	0 & 0 & 0 & 0 \\
	-1 & 1 & 0 & 0 \\
	0 & 0 & 0 & 0 \\
\end{array}
\right).
\end{split} 
\end{equation*}
\end{itemize} 

To analyse how these affect the earlier calculations, we have to first note that the entries $\Delta_1 + \Delta_2$ subtracts from must be at least two, because we want to subtract from the same entries with $M$ and $M'$. Only one of the two types is possible at a time. 
\begin{itemize}
\item In the former case, there are two possibilities: either one or two intersections are possible. Let us first consider the case of one intersection. If $-\Delta_2$ (or $-\Delta_1$ if the order of the moves is switched) is to be included in $d(u)$, we must have $d(u) > {n \choose 2} + n - 1$, because there has to be at least three positive entries in one column, and therefore $c_M > {n \choose 2} + n - 1 - 4 \geq {n \choose 2} - 2$. The $-4$ comes from three disallowed moves and one intersection. If not, the degree is at least ${n \choose 2} + n - 1$ by \cref{lem:degrees}, which means we have $c_M \geq {n \choose 2} + n - 4 \geq {n \choose 2} - 1$. Now, assume that there are two intersections. The sum $\Delta_1 + \Delta_2$ shows that there must be at least three positive entries in the $3\times3$-submatrix. However, when we write the sum in another way, the other move is $-M$. Thus, there has to be three additional positive entries, because in the different cases, $M$ subtracts in total from at least two of the one-entries in $\Delta_1 + \Delta_2$ and one other entry. Hence, the support of $u$ has size at least $n + 3$. If we pick a positive entry from the $3\times3$-submatrix to be subtracted from by a Markov move, and the entry is such that four of the other five positive entries are on its row or column, the selection of the second positive entry can be done in $n - 1$ ways, because there cannot be $r$-entries in the row or column of the first entry. Clearly, if we pick the first entry in a different way, there are cases where the second selection can be done in even more ways, but no cases where in less. Thus, $c_M > \frac{(n + 3)(n - 1)}{2} - 5 \geq {n \choose 2} - 2$, because $n \geq 3$. The $-5$ comes from three disallowed moves and two intersections.
\item In the latter case, $d(u)$ must be at least ${n \choose 2} + n - 1$ by \cref{lem:degrees}, and we have $c_M \geq {n \choose 2} + n - 1 - 4 \geq {n \choose 2} - 1$, because we must have $n \geq 4$. We subtract four, because there are at most three disallowed moves and one intersection.
\end{itemize}

In the two other cases we have $M'$ and $M$ sharing one non-zero row or column, which disappears in the sum $M'+(-M)$. An example is presented below: \begin{equation*} \begin{split} 
\Delta &= \left(
\begin{array}{rrrr}
 	1 & -1 & 0 & 0 \\
	-1 & 1 & 0 & 0 \\
	0 & 0 & 0 & 0 \\
	0 & 0 & 0 & 0 \\
\end{array}
\right) = \left(
\begin{array}{rrrr}
 	0 & 0 & 0 & 0 \\
	-1 & 1 & 0 & 0 \\
	0 & 0 & 0 & 0 \\
	1 & -1 & 0 & 0 \\
\end{array}
\right) + \left(\begin{array}{rrrr}
 	1 & -1 & 0 & 0 \\
	0 & 0 & 0 & 0 \\
	0 & 0 & 0 & 0 \\
	-1 & 1 & 0 & 0 \\
\end{array}
\right)\\ &= \left(
\begin{array}{rrrr}
 	1 & 0 & -1 & 0 \\
	-1 & 0 & 1 & 0 \\
	0 & 0 & 0 & 0 \\
	0 & 0 & 0 & 0 \\
\end{array}
\right) + \left(\begin{array}{rrrr}
 	0 & -1 & 1 & 0 \\
	0 & 1 & -1 & 0 \\
	0 & 0 & 0 & 0 \\
	0 & 0 & 0 & 0 \\
\end{array}
\right) = M'+(-M). 
\end{split}
\end{equation*}  
We assume that either $\Delta_1$ or $\Delta_2$ causes intersections, and denote the one causing them with $\Delta$. Let the other one be $\Delta'$. Because $M'$ and $-M$ share one row with $\Delta$, $\Delta$ also adds to a positive entry, because $M$ needs to subtract from that. Let the position of that entry be $(i_1, j_1)$. 

\begin{itemize}
\item Assume that at least one of the entries $e_i$ $\Delta'$ subtracts from satisfies $1 \leq e_i \leq r-1$. Then there must be at least one positive entry in the same column and one in the same row. If they are both in the row $i_1$ and column $j_1$, $e_i$ is in the position $(i_1, j_1)$. Otherwise, we can find 1-entries that do not use the row $i_1$ and the column $j_1$ for each $e_i$ $\Delta'$ subtracts from satisfying $1 \leq e_i \leq r-1$. Denote them with $(i_2, j_2)$ and $(i_3, j_3)$. It might be that $(i_3, j_3)$ does not exist or $(i_2, j_2) = (i_3, j_3)$. We have $u \geq E_n(i_1, j_1) + E_n(i_2, j_2) + E_n(i_3, j_3)$. They can all be put in the same $u_1 \in V(G(n, 1))$ in the construction of the proof of \cref{lem:decomp}. Thus, by \cref{lem:decomp}, we have $u = u_1 + u'$, where $u_1$ is such that it does not contain the entries at most $r-1$ $\Delta$ or $\Delta'$ subtract from. Because $d(u_1) = {n \choose 2}$, and the choices $M = \Delta_1$ and $M = \Delta_2$ as well as intersections are avoided in $u_1$, we have $c_M \geq {n \choose 2} - 1$.
\item Now, assume that both of the entries $\Delta'$ subtracts from are $r$. As before, decompose $u = u_1 + u'$ using \cref{lem:decomp}. This time, we cannot avoid the entries used by $\Delta'$, but they will surely be large enough to be usable by $M$. Again, $u_1$ does not contain the entries subtracted from by $\Delta$. Hence, we cannot have intersections of the other type occuring with moves from $u_1$ and have to only avoid $M = -\Delta_1$, because $\Delta_2$ adds to zero-entries, and thus $M = -\Delta_2$ is not included in $d(u_1)$. We have $c_M \geq {n \choose 2} - 1$.
\end{itemize}
In the latter case, $\Delta'$ cannot cause intersections because of the assumption that $\Delta'$ subtracts from $r$-entries, but in the former case it could. Because the entries $\Delta'$ subtracts from are in $u'$, the calculations hold even if intersections of the type $M' - M = \Delta'$ are assumed possible. \qedhere
\end{component}
\end{proof}

The last result in this paper concerns the diameter of $G(n,r)$:

\begin{prop}
The diameter of $G(n,r)$ is $(n-1)r$.
\end{prop}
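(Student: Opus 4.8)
The plan is to prove matching bounds: every pair of vertices of $G(n,r)$ lies within distance $(n-1)r$, and a specific pair attains it.

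For the upper bound I would decompose both endpoints into permutation layers and sort the layers independently. Given $u,v \in V(G(n,r))$, \cref{thm:konig} (equivalently \cref{lem:decomp} with $k=0$) yields decompositions $u = P_1 + \cdots + P_r$ and $v = Q_1 + \cdots + Q_r$ with each $P_t, Q_t \in V(G(n,1))$ a permutation matrix. The key observation is that any Markov move keeping a single layer $P_t$ a permutation matrix is automatically a valid move on the whole of $u$: the two decremented entries equal $1$ in that layer, hence are positive, and the two incremented entries become at most $1 + (r-1) = r$. So a shortest path from $P_t$ to $Q_t$ inside $G(n,1)$ lifts, layer by layer, to a walk in $G(n,r)$ from $u$ to $v$ of the same total length. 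Now $G(n,1)$ is exactly the Cayley graph of the symmetric group generated by all transpositions, since a move there sends $\sigma$ to $\sigma\circ(i\,k)$; its diameter is $n-1$, because any permutation is sorted in $n - c(\sigma) \le n-1$ transpositions, with the long cycle needing $n-1$. Summing over the $r$ layers gives $d_{G(n,r)}(u,v) \le r(n-1)$.

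For the lower bound I would exhibit a potential that changes by at most one along each edge together with two vertices whose potentials differ by $(n-1)r$. Define $\Phi(M) = \sum_{i \le j} M_{ij}$, the total mass on and above the main diagonal. A Markov move alters four entries in a rectangle with rows $i<k$ and columns $j<l$, so it changes $\Phi$ by $\pm\bigl(\mathbf 1[i\le j] + \mathbf 1[k\le l] - \mathbf 1[i\le l] - \mathbf 1[k\le j]\bigr)$; writing this as $[\mathbf 1(i\le j) - \mathbf 1(k \le j)] - [\mathbf 1(i\le l) - \mathbf 1(k\le l)]$ and using that $\mathbf 1[i \le j]$ is nonincreasing in $i$ shows each bracket lies in $\{0,1\}$, so the change lies in $\{-1,0,1\}$. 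Hence $|\Phi(u)-\Phi(v)| \le d_{G(n,r)}(u,v)$ for all vertices. Taking $u = rI$ gives $\Phi(u) = nr$, while taking $v = rP_\sigma$ for the subdiagonal long cycle $\sigma(1) = n$, $\sigma(i) = i-1\ (i\ge 2)$, whose only $r$-block on or above the diagonal sits in row $1$, gives $\Phi(v) = r$. Thus $d(u,v) \ge (n-1)r$, and with the upper bound this pair realizes the diameter.

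I expect the main obstacle to be the lower bound rather than the upper bound: naive candidates such as the reversal permutation, or a quadratic weight $\sum ij\,M_{ij}$, give per-move changes as large as $(n-1)^2$ or gaps only of order $rn/2$, falling short of $(n-1)r$ by a constant factor. The decisive points are therefore (i) recognizing that the staircase weight $\mathbf 1[i\le j]$ has all rectangle cross-differences in $\{-1,0,1\}$, so every Markov move is \emph{slow} for $\Phi$, and (ii) choosing the target $rP_\sigma$ so that all but one of its $r$-blocks sit strictly below the diagonal, opening the full gap $nr - r$. The base cases $r=1$ and $n=2$ are covered by the same permutation computation, and the only delicate point of the upper bound is that the layerwise swaps never push an entry outside $[0,r]$, which the permutation structure guarantees.
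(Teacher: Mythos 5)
Your proof is correct, but both halves run along genuinely different lines from the paper's. For the upper bound the paper corrects the matrix row by row ($r$ moves per row, with the last row coming for free), whereas you decompose both endpoints into $r$ permutation layers via K\H{o}nig's theorem and sort each layer inside $G(n,1)$, identified with the transposition Cayley graph of $S_n$, whose diameter is $n-1$; your lifting argument (the decremented entries are $1$ in the active layer, hence positive in the full matrix, and every intermediate matrix is again a sum of $r$ permutation matrices) is sound and more structural. For the lower bound the paper uses the same extremal pair, $rI$ and $r$ times the long-cycle permutation matrix, but argues by counting the swaps needed to sort the cyclic permutation and then asserting that the cost scales by $r$ --- a step that implicitly restricts attention to paths passing through scaled permutation matrices. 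Your potential $\Phi(M)=\sum_{i\le j}M_{ij}$, which every Markov move changes by at most one (the rectangle cross-difference computation checks out), bounds the length of \emph{arbitrary} paths by $d(u,v)\ge|\Phi(u)-\Phi(v)|=nr-r$, so it is cleaner and fully rigorous exactly where the paper's argument is only sketched. One cosmetic remark: the worry about incremented entries exceeding $r$ is vacuous, since a non-negative matrix with row sums $r$ automatically has all entries at most $r$; only non-negativity of the decremented entries needs checking.
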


\begin{proof}
Every row sum is $r$, and each of the positive entries can be selected to be subtracted from. Therefore, $r$ changes are enough to transform a row to any other. The $n$:th row must be correct at least after changing the $(n-1)$:th row, because otherwise we would have to change an already correct row to incorrect. The maximal number of changes needed is then $(n-1)r$, and diam$(G(n,r)) \leq (n-1)r$.  

Now, it suffices to show that diam$(G(n,r)) \geq (n-1)r$. Take the diagonal matrix 

\begin{equation*}
A = \left(
\begin{array}{ccccc}
 	r & 0 & \cdots & 0 \\
	0 & r & \cdots & 0 \\
	\vdots & \vdots & \ddots & \vdots \\
	0 & 0 & \cdots & r \\
\end{array}
\right).
\end{equation*}

The coordinates of the nonzero-entries are of the form $(i,i), i \in \mathbb{N} \cap [1,n]$. Consider permuting the rows so that $(i,i) \mapsto (i,i-1), i \neq 1$, and $(1,1) \mapsto (1,n)$. The result is \begin{equation*} 
A' = \left(
\begin{array}{ccccc}
 	0 & 0 & \cdots & r \\
	r & 0 & \cdots & 0 \\
	\vdots & \vdots & \ddots & \vdots \\
	0 & 0 & \cdots & 0 \\
\end{array}
\right),
\end{equation*} and the permutation matrix \begin{equation*} P = \left(
\begin{array}{ccccc}
 	0 & 0 & \cdots & 1 \\
	1 & 0 & \cdots & 0 \\
	\vdots & \vdots & \ddots & \vdots \\
	0 & 0 & \cdots & 0 \\
\end{array}
\right).\end{equation*} On the other hand, $A = rI$. If $p$ is the number of operations needed to change $\frac{1}{r}A' = P$ to $I$, the number of operations needed to change $A'$ to $A$ is clearly $pr$. 

Consider this procedure: start from the row $i = 1$. Find the row which has its one-entry in the column $i$, in this case the second row, and swap the rows. Repeat this for each of the rows except for the $n$:th one. Before the $(n-1)$:th row is swapped for the second time, it will have its one-entry in the $n$:th column, so by interchanging it with the $n$:th row we will get to $I$.

In our procedure, each of the swaps corrects the place of one one-entry except for the last one which corrects two. However, we might be able to use more swaps that correct two positions. These kind of interchanges require pairs of one-entries to be in positions of the form $(i,j)$ and $(j,i)$. Say that we swap $(i, j)$ with $(i',j')$ to get $(i',j)$ and $(i,j')$. Assume $i > i'$. If $i' < j$ and $i < j'$, $j' > i'$. Thus, the number of entries in a position of the form $(j,i), i > j$ increases by at most one with each swap. There are $n-1$ positive entries in positions of the form $(i,j), i > j$ in $P$. To interchange the positions of two of $n-2$ entries (the entries not in the positions $(1,n)$ and some other) correcting both, we would then need at least one extra swap. Thus, the best possible result we could get this way is still $n-1$ swaps.

Each swap consists of one operation. Thus, $p = n-1$, and therefore we need $(n-1)r$ operations to make $A'$ from $A$. Hence, diam$(G(n,r)) \geq (n-1)r$, but because also diam$(G(n,r)) \leq (n-1)r$, diam$(G(n,r)) = (n-1)r$.
\end{proof}

\section*{Appendix}

In this appendix, we state the technical version of the conjecture mentioned in the introduction. The vertices of a fiber graph are the monomials in the preimage of some monomial $m$ in $\Bbbk[y_1,\ldots,y_n].$ For some fixed lattice ideal and Gr\"obner basis, a fiber graph is $N$-\emph{large} if it is the preimage of a monomial $m$ that is divisible by $(y_1\cdots y_n)^{N}.$ For ideals from contingency tables this corresponds to that each row and column sum is at least $N.$

\begin{conj}[Engstr\"{o}m '12, \cite{e}\cite{p}]
For any lattice ideal with a Gr\"obner basis, there is an $N$ such that the connectivity of any $N$-large fiber graph is given by its minimum vertex degree.
\end{conj}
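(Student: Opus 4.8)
The plan is to lift the architecture of the proof of \cref{thm:maintheorem} from the rigid $2\times2$-swap basis to an arbitrary Gr\"obner basis $\mathcal{B}$ of a lattice ideal. For any graph the inequality $\kappa\le\delta$ is automatic, since deleting the neighbourhood of a minimum-degree vertex disconnects the graph; this is exactly the argument behind \cref{prop:connmax}, and it requires no hypothesis on $N$. Every fiber graph of a Gr\"obner basis is connected by the unique-sink property (\cref{prop:conn}), so \cref{lem:liuc} applies, and the whole content reduces to the lower bound: for a fixed ideal one must exhibit an $N$ such that in every $N$-large fiber graph $G$, any two vertices $u,v$ with $d_G(u,v)=2$ are joined by $\delta(G)$ internally disjoint paths. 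It is worth anchoring the plan in the present paper's result: for $G(n,r)$ the $N$-large condition is exactly $r\ge N$, and \cref{thm:maintheorem} is the case $N=3$.

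The construction of the paths would be the detour technique of \cref{thm:maintheorem}. Writing $u+\Delta_1+\Delta_2=v$ with $\Delta_1,\Delta_2\in\pm\mathcal{B}$, one considers the detours $M,\Delta_1,\Delta_2,-M$ as $M$ ranges over the moves of $\mathcal{B}$ valid at $u$; together with the direct length-two path these are the candidate paths. The count starts at $d(u)\ge\delta(G)$ and is then reduced by three kinds of defects: \emph{forbidden} moves (the coincidences among $M=\Delta_1$, $M=-\Delta_2$, $M=\Delta_2$, $M=-\Delta_1$), \emph{invalid} detour steps where $M$ is applicable at $u$ but $M+\Delta_1$ or $M+\Delta_1+\Delta_2$ leaves the fiber, and \emph{collisions} between distinct detours $M,M'$. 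As in the special case, the target is to show that after all deductions $c_M\ge\delta(G)-1$, so that counting the direct path yields $\delta(G)$ disjoint paths.

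The role of $N$-largeness is to make these defects controllable by a bound depending only on $\mathcal{B}$ and not on $N$. Let $B$ be the maximum absolute value of a coordinate occurring in an element of $\mathcal{B}$. Since $N$-largeness forces the sufficient statistic to be large in every coordinate direction, the entries that are \emph{small}, say smaller than $2B$ and hence capable of obstructing a detour, are confined to a subconfiguration whose combinatorial type is drawn from a finite list independent of $N$. The key technical step is a general analog of \cref{lem:decomp}: a decomposition $u=u_{\mathrm{bulk}}+u_{\mathrm{small}}$ in which $u_{\mathrm{bulk}}$ consists of large entries automatically usable by $M$ and $u_{\mathrm{small}}$ isolates the finitely many problematic entries, reducing the count of valid $M$ at $u$ to a count in a fiber of bounded size. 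Paired with a general analog of \cref{lem:commonchoices}, a lower bound uniform in $N$ on the number of moves applicable at both $u$ and $v$, this should bound the total number of forbidden, invalid, and colliding choices by a constant $C(\mathcal{B})$.

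The main obstacle is twofold. First, the decomposition of \cref{lem:decomp} rests on \cref{thm:konig} applied to the bipartite biadjacency structure of a table, and there is no König-type peeling for a generic lattice ideal; producing a replacement that isolates the bounded problematic part is the central structural difficulty. Second, even granting such a decomposition, one must reconcile the constant bound $C(\mathcal{B})$ with minimum-degree vertices, where no surplus degree is available to absorb defects. In the contingency-table case this works because an obstruction is never free: \cref{lem:degrees}(b) shows that the instant a vertex fails to have all positive entries equal, its degree jumps from ${n \choose 2}$ to at least ${n \choose 2}+n-1$, and this surplus is precisely calibrated to the extra deductions the obstruction causes. The heart of the general conjecture is to prove the analogous \emph{surplus-versus-obstruction trade-off} for arbitrary $\mathcal{B}$: that for $N$ large enough, a vertex carrying enough small entries to create $t$ defects necessarily exceeds $\delta(G)$ in degree by at least $t-1$. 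Establishing this uniform trade-off, equivalently a classification of how minimum-degree vertices embed in $N$-large fibers, is exactly where the rigidity enjoyed by $G(n,r)$ is lost and where input beyond the present methods will be needed.
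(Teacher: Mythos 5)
There is a genuine gap here, and it is worth being precise about its nature: the statement you are asked to prove is stated in the paper as a \emph{conjecture}, and the paper itself offers no proof of it. What the paper proves is the single special case $G(n,r)$ with $r>2$ (\cref{thm:maintheorem}), which it presents as the first confirmation of the conjecture for one class of Gr\"obner bases. Your proposal does not close this distance; it is a research plan, not a proof, and you say as much in your own closing sentence (``input beyond the present methods will be needed''). The two steps you defer are not technical details to be filled in later --- they \emph{are} the conjecture. First, your decomposition $u = u_{\mathrm{bulk}} + u_{\mathrm{small}}$ has no construction behind it: \cref{lem:decomp} rests on \cref{thm:konig}, i.e.\ on the very special fact that fibers of the two-way transportation problem are non-negative integer combinations of permutation matrices, and no analogous peeling is known (or even plausible in this generality) for an arbitrary lattice ideal. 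Second, the ``surplus-versus-obstruction trade-off'' you isolate --- that a vertex creating $t$ defects must exceed $\delta(G)$ in degree by at least $t-1$ --- is precisely the statement whose truth for general $\mathcal{B}$ nobody knows how to establish; in $G(n,r)$ it follows from the explicit degree formulas of \cref{lem:degrees}, which have no counterpart for a general ideal.

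A further warning sign that any proof must be subtler than a uniform defect count: the paper's appendix contains Hemmecke's example, where for a fixed lattice ideal and Gr\"obner basis the fiber graph is two $k$-cubes joined by a single edge --- minimum degree $k$ but connectivity $1$. This shows that the number of ``defects'' at a minimum-degree vertex is \emph{not} bounded by a constant $C(\mathcal{B})$ across all fibers; it can grow without bound in small fibers, and only the passage to $N$-large fibers can kill such configurations. Your plan, which bounds defects by a constant independent of $N$ and then absorbs them into degree surplus, would be contradicted by this example unless the $N$-largeness enters the argument at a much earlier and more structural point than where you place it. So the honest assessment is: the upper bound $\kappa \le \delta$ and the reduction via \cref{prop:conn} and \cref{lem:liuc} are fine and match the paper's framework, but everything after that is an open problem, not a proof.
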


This is an example by Raymond Hemmecke why the technical condition is needed. Construct a lattice ideal from the $(2k+1)\times (4k+2)$ matrix
\[
\left( 
\begin{array}{ccccccccccccc}
1 && 1 &&& &&&&& -1 & \\
& \ddots && \ddots &&&&&&& \vdots \\
&& 1 && 1&&&&&&-1 \\
&&&&&1 && 1 &&&& -1 & \\
&&&&&& \ddots && \ddots &&& \vdots \\
&&&&&&& 1 && 1&& -1 \\
&&&&&&&&&& 1 & 1 \\
\end{array}
\right)
\]
defining a map $k[x_1,\ldots,x_{4k+2}] \rightarrow k[y_1,\ldots,y_{2k+1}]$ and use the Gr\"obner basis from lexicographic ordering. Then the fiber graph of the preimage of $y_{2k+1}$ is the one dimensional skeleton of two $k$-dimensional cubes connected by one edge. This fiber graph has minimum degree $k$ but is not 2-connected.

\begin{flushleft}
Samu Potka\\
Aalto University\\
Department of Mathematics and Systems Analysis\\
PO Box 11100\\
FI-00076 Aalto\\
Finland\\
e-mail: samu.potka@aalto.fi
\end{flushleft}

\end{document}